\newtheorem{theorem}{Theorem}[section]
\newtheorem{corollary}[theorem]{Corollary}
\newtheorem{lemma}[theorem]{Lemma}
\theoremstyle{definition}
\newtheorem{notation}[theorem]{Notation}
\begin{document}
	\title[The influence of the prime omega function]{The influence of the prime omega function on the product of element orders in finite groups}
\author[M. Baniasad Azad \& M. Arabtash]{Morteza Baniasad Azad \& Mostafa Arabtash}
\address{Abolfazl Street, 22 Bahman Blvd., Bardsir, Kerman 78416-64979, Iran \newline }
\email{baniasad84@gmail.com}
\address{School of Information and Engineering, Dalarna University, Sweden\newline }
\email{mob@du.se}

\thanks{}
\subjclass[2010]{20D60.}

	\keywords{Finite groups, prime omega function, element orders, product of element orders.}
	
	\begin{abstract}
Let $G$ be a finite group and define $\rho(G) = \prod_{x \in G} o(x)$, where $o(x)$ denotes the order of the element $x \in G$.  Let $\Omega$ be the prime omega function giving the number of (not necessarily distinct) prime factors of a natural number.

In this paper,  we consider the function $\Omega_{\rho}(G):= \Omega(\rho(G))$. We show that, under certain conditions, this function exhibits behavior analogous to the derivative in calculus. We establish the following results:
\textbf{(Product rule)}	If $A$ and $B$ are finite groups, where $\operatorname{gcd}(|A|,|B|)=1$, then $\Omega_{\rho}(A\times B) =  \Omega_{\rho}(A) \cdot |B|+\Omega_{\rho}(B) \cdot |A|$. \\
\textbf{(Quotient rule)}
If $P$ is a  central cyclic normal Sylow $p$-subgroup of a finite group $G$, then
$	\Omega_{\rho}(\dfrac{G}{P}) = \dfrac{\Omega_{\rho}(G)\cdot|P|-\Omega_{\rho}(P)\cdot |G|}{{|P|}^2}.$ \\
Moreover, we show that
if $C$ is a cyclic group and $G$ is a non-cyclic group of the
same order, then $\Omega_{\rho}(G) \leq \Omega_{\rho}(C)$.
Finally, we show that if $G$ is a group of order $|L_2(p)|$,
then $\Omega_{\rho}(G) \geqslant \Omega_{\rho}(L_2(p))$, where $p \in \{5, 11, 13\} $.
	\end{abstract}

	\maketitle
	\section{\bf Introduction and Preliminary Results}
All groups considered in this paper are assumed to be finite.  
For a finite group $G$, let $\rho(G)$ denote the product of the orders of its elements, i.e., $\rho(G) = \prod_{x \in G} o(x)$.  
For any positive integer $n$ and any non-cyclic group $G$ of order $n$, if $C_n$ denotes the cyclic group of order $n$, then the inequality $\rho(G) < \rho(C_n)$ holds (see \cite[Theorem 3]{PatassinizbMATH06708963}).

Moreover, for any non-cyclic supersolvable group of order $n$, if either $G$ is nilpotent or $G$ is not metacyclic, then it has been shown that
\[
\rho(G) \leq q^{-n(q-1)/q} \rho(C_n),
\]
where $q$ is the smallest prime divisor of $n$ (see \cite[Theorem A]{noce}).

	Let $l(G)=\sqrt[|G|]{\rho(G)}/|G|$.
	In \cite[Theorem B]{grazian2024structure}, it has been established that if $l(G) > l(D_{2p})$, where $p$ is an odd prime dividing the order of $G$, then the group $G$ is $p$-nilpotent. In \cite{baniAust}, it was shown that if  $l(G) > l(S_3)$, $l(G) > l(A_4)$, or $l(G) > l(A_5)$, then $G$ must be nilpotent, supersolvable, or solvable, respectively.
	
	Furthermore, in \cite{baniMal}, it was demonstrated that the simple groups $L_2(7)$ and $L_2(11)$ are uniquely determined by the product of their element orders. In a series of subsequent studies \cite{banihamedapplication,banihamedcom}, several families of groups were identified as uniquely characterized by this product. Among such families are the groups of the form $A_5 \times C_p$, where $p > 5$ is a prime number.

In number theory, the prime omega function $\Omega(n)$ counts the total number of prime factors of a positive integer $n$, with multiplicity. Formally, if $n$ is a positive integer with the prime factorization 
$n = p_1^{\alpha_1} p_2^{\alpha_2} \cdots p_k^{\alpha_k}$,
where $p_1, p_2, \dots, p_k$ are distinct prime numbers, then
$\Omega(n) = \alpha_1 + \alpha_2 + \cdots + \alpha_k$.
 The function $\Omega(n)$ is completely additive.
For a finite group $G$, we define  $ \Omega_{\rho}(G):= \Omega(\rho(G)) $.
As an example, if $G \cong A_5$, then 
$$\rho(G) = 2^{15}\cdot3^{20}\cdot5^{24},  \qquad \Omega_{\rho}(G)=59. $$

In this paper, first we prove that:
\begin{itemize}
\item
\textbf{(Product Rule).}
If $A$ and $B$ are finite groups, then the following inequality holds:
\[
\Omega_{\rho}(A \times B) \leq \Omega_{\rho}(A) \cdot |B| + \Omega_{\rho}(B) \cdot |A|.
\]
Moreover, equality holds, that is,
\begin{align*}
	\Omega_{\rho}(A \times B) = \Omega_{\rho}(A) \cdot |B| + \Omega_{\rho}(B) \cdot |A|,
\end{align*}
if and only if $\gcd(|A|, |B|) = 1$.

\item
\textbf{(Quotient Rule).}
Let $P$ be a cyclic normal Sylow $p$-subgroup of a finite group $G$. Then it is shown that
\[
\Omega_{\rho}\left( \frac{G}{P} \right) \geq \frac{\Omega_{\rho}(G) \cdot |P| - \Omega_{\rho}(P) \cdot |G|}{|P|^2}.
\]
Furthermore, equality holds,
\begin{align*}
	\Omega_{\rho}\left( \frac{G}{P} \right) = \frac{\Omega_{\rho}(G) \cdot |P| - \Omega_{\rho}(P) \cdot |G|}{|P|^2},
\end{align*}
if and only if $P$ is central in $G$.
\end{itemize}

Moreover, we show that
if $C$ is a cyclic group and $G$ is a non-cyclic group of the
same order, then $\Omega_{\rho}(G) \leq \Omega_{\rho}(C)$.
Finally, we show that if $G$ is a group of order $|L_2(p)|$,
then $\Omega_{\rho}(G) \geqslant \Omega_{\rho}(L_2(p))$, where $p \in \{5, 11, 13\} $.

To establish these results, several auxiliary lemmas will be required.
\begin{lemma}\cite[Corollary 2.10]{amiri} \label{amiri}
	Let $ G $ be a finite  group of order $ n $. Then there exists a bijection $ f $ from $ G $ onto $ C_n $
	such that $ o(x) | o(f(x)) $ for all $ x \in G $.
\end{lemma}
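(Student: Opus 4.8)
The plan is to recast the statement as a bipartite matching problem and apply Hall's marriage theorem. Form the bipartite graph $\Gamma$ whose two vertex classes are the underlying sets of $G$ and of $C_n$, joining $x \in G$ to $y \in C_n$ precisely when $o(x) \mid o(y)$. A bijection $f$ with the required divisibility property is exactly a perfect matching of $\Gamma$, so it suffices to verify Hall's condition $|N(S)| \ge |S|$ for every $S \subseteq G$. Since the neighbourhood $N(S)$ depends only on the set of element orders occurring in $S$, and since an element of smaller order has a larger neighbourhood, it suffices to check this on the sets $S_U = \{x \in G : o(x) \in U\}$ indexed by an up-set (filter) $U$ of the divisibility lattice of the divisors of $n$; for such a set one checks that $N(S_U) = \{y \in C_n : o(y) \in U\}$. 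Thus the whole problem reduces to proving, for every up-set $U$, the counting inequality
\[
|\{x \in G : o(x) \in U\}| \le |\{y \in C_n : o(y) \in U\}|,
\]
or equivalently, passing to the complementary down-set $D$,
\begin{equation*}
|\{x \in G : o(x) \in D\}| \ge |\{y \in C_n : o(y) \in D\}|. \tag{$\star$}
\end{equation*}

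For the principal down-sets $D_m = \{d : d \mid m\}$ the inequality $(\star)$ is a classical consequence of Frobenius's theorem. Indeed $\{x \in G : o(x) \in D_m\} = \{x \in G : x^m = 1\} = \{x : x^{\gcd(m,n)} = 1\}$, and since $\gcd(m,n)$ divides $|G| = n$, Frobenius's theorem guarantees that the number of solutions of $x^{\gcd(m,n)} = 1$ is a positive multiple of $\gcd(m,n)$, hence at least $\gcd(m,n)$. On the other hand $C_n$ has exactly $\gcd(m,n)$ such elements, forming its unique subgroup of that order. This settles $(\star)$ for principal down-sets, which are precisely the cases in which equality can be forced.

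The main obstacle is to upgrade $(\star)$ from principal down-sets to arbitrary ones. Writing a down-set $D$ in terms of its maximal elements $m_1,\dots,m_k$, the two sides of $(\star)$ become the cardinalities of the unions $\bigcup_i \{x : x^{m_i}=1\}$ and $\bigcup_i C_{m_i}$, whose pairwise intersections are governed by $\{x : x^{\gcd(m_i,m_j)}=1\}$ and $C_{\gcd(m_i,m_j)}$ respectively. Frobenius's theorem dominates each intersection term individually, but the inclusion–exclusion expansion of a union carries alternating signs, so term-by-term domination does not by itself yield the inequality for the union; in fact the union inequality fails for abstract set systems satisfying only the pairwise bounds, which shows that genuine group structure must enter here. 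I expect this to be the crux. I would resolve it by induction on $|G|$, exploiting the product-of-chains structure of the divisor lattice of $n$ (for instance via a symmetric chain decomposition, along each chain of which $(\star)$ collapses to the already-established principal case), or by peeling off a Sylow layer and comparing $G$ with $C_n$ through the corresponding quotient, so that the general down-set estimate is assembled from the principal ones in a sign-consistent way.

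Finally, the construction is self-checking: once $f$ is produced, the divisibilities $o(x) \mid o(f(x))$ give $o(x) \le o(f(x))$ for every $x$, and multiplying over all $x \in G$ recovers $\rho(G) \le \rho(C_n)$, in agreement with the inequality quoted from \cite{PatassinizbMATH06708963}. This confirms that the lemma is exactly the combinatorial engine behind the cyclic extremality of $\rho$, and hence of $\Omega_\rho$.
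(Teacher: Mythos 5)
Your reduction is sound as far as it goes: encoding the desired bijection as a perfect matching in the bipartite graph on $G$ and $C_n$, invoking Hall's marriage theorem, observing that it suffices to verify Hall's condition on the sets $S_U$ indexed by up-sets $U$ of the divisor lattice of $n$, and settling the principal down-set case $D_m=\{d : d\mid m\}$ via Frobenius's theorem (the number of solutions of $x^{\gcd(m,n)}=1$ in $G$ is a positive multiple of $\gcd(m,n)$, while $C_n$ has exactly $\gcd(m,n)$ such elements). All of these steps are correct, and your remark that term-wise domination of intersections cannot by itself control the unions, so that genuine group structure must enter, is a fair diagnosis.

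The problem is that the proof stops exactly where the cited theorem begins. The inequality you call $(\star)$ for an \emph{arbitrary} down-set $D$ is never established: you name it as the crux and then offer two unexecuted strategies (induction through a symmetric chain decomposition of the divisor lattice, or peeling off a Sylow layer) without showing either can be carried out. Neither is routine. Along a chain of a symmetric chain decomposition, the number of elements of $G$ whose order lies in a chain segment is a \emph{difference} of two Frobenius counts, which reintroduces precisely the sign problem you identified; and the Sylow-peeling idea has no obvious traction because a general $G$ of order $n$ need not admit any decomposition compatible with that of $C_n$. This missing implication --- from principal down-sets to general ones --- is essentially the whole content of Amiri's result \cite{amiri}, which the paper does not reprove but simply quotes. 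Note also that your closing consistency check (recovering $\rho(G)\leq\rho(C_n)$) confirms only that the lemma \emph{implies} Lemma \ref{cyclic}; it provides no evidence for the unproved step. As written, your proposal is a correct and well-motivated reduction of the lemma to a nontrivial counting inequality, but not a proof of the lemma.
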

	\begin{lemma}\cite[Theorem 3]{PatassinizbMATH06708963} \label{cyclic}
	Let $G$ be a finite group of order $n$. Then $\rho(G) \leq \rho(C_n)$
	with equality if and only if $G$ is cyclic.
\end{lemma}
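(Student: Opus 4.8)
The plan is to derive this statement directly from Lemma \ref{amiri}, which supplies a bijection $f : G \to C_n$ satisfying $o(x) \mid o(f(x))$ for every $x \in G$. Since all element orders are positive integers and divisibility forces $o(x) \leq o(f(x))$, I would multiply these pointwise inequalities over all $x \in G$. Because $f$ is a bijection, the multiset $\{\, o(f(x)) : x \in G \,\}$ coincides with $\{\, o(y) : y \in C_n \,\}$, so
\[
\rho(G) = \prod_{x \in G} o(x) \leq \prod_{x \in G} o(f(x)) = \prod_{y \in C_n} o(y) = \rho(C_n),
\]
which is exactly the asserted inequality.

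For the equality characterization, note that since each factor satisfies $o(x) \leq o(f(x))$ with all terms positive integers, the product inequality becomes an equality if and only if $o(x) = o(f(x))$ for every $x \in G$. Thus $\rho(G) = \rho(C_n)$ is equivalent to $o(x) = o(f(x))$ for all $x$. If $G$ is cyclic then $G \cong C_n$ and equality is immediate. Conversely, if $o(x) = o(f(x))$ holds for all $x$, then taking $y \in C_n$ to be a generator, so that $o(y) = n$, and setting $x = f^{-1}(y)$ produces an element of $G$ with $o(x) = n = |G|$; hence $G$ is cyclic.

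The point of this argument is that all of the genuine difficulty is already packaged inside Lemma \ref{amiri}: constructing an order-respecting bijection onto $C_n$ is the substantive step, and once it is in hand both the inequality and its equality case are purely formal. I expect no essential obstacle here. The only mildly delicate point is the equality analysis, where one must use that Lemma \ref{amiri} delivers divisibility (hence, term by term, genuine comparability of positive integers) rather than a mere inequality, and then observe that pulling back a generator of $C_n$ exhibits a full-order element of $G$. Everything else reduces to invoking the cited lemma correctly.
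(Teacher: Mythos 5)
Your proof is correct, but note what you are being compared against: the paper does not prove Lemma \ref{cyclic} at all --- it imports it verbatim from Garonzi and Patassini \cite[Theorem 3]{PatassinizbMATH06708963}. What you have done is re-derive that external theorem from a different external ingredient, Amiri's bijection (Lemma \ref{amiri}). The derivation is sound: the termwise divisibilities multiply to give $\rho(G) \leq \rho(C_n)$; equality of products of positive integers subject to termwise inequalities forces termwise equality; and pulling a generator of $C_n$ back through $f$ then exhibits an element of order $n$ in $G$, so $G$ is cyclic. In fact your argument is essentially the one the paper itself uses for the $\Omega_{\rho}$ analogue (the theorem immediately following this lemma), except that there the authors only extract the inequality from Lemma \ref{amiri} and fall back on the cited Lemma \ref{cyclic} to settle the equality case; your equality analysis shows this fallback is unnecessary, since the bijection already delivers it. What your route buys is self-containedness modulo Amiri's result; what the paper's citation route buys is independence from Amiri's theorem, which is historically much later and logically stronger than the Garonzi--Patassini inequality --- and if one cares about dependencies in the literature rather than inside this paper, one would need to check that Amiri's proof does not itself rest on \cite[Theorem 3]{PatassinizbMATH06708963}, a circularity concern that your argument inherits but that is invisible when both results are treated as black boxes.
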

	\begin{lemma} \label{mid1}   \cite[Lemma 2.6]{baniAust}
	Let $ G  $ be a finite group satisfying  $ G = P  \rtimes F$, where $P$ is a cyclic $p$-group
	for some prime $p$, $|F| > 1$ and $(p,|F|) = 1$. Then
	\[\rho(G) = \rho(P)^{|C_{F}(P)|} \rho(F)^{|P|}.\]
\end{lemma}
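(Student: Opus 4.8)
The plan is to use the unique decomposition $g = xy$ with $x \in P$ and $y \in F$ afforded by $G = P \rtimes F$, and to compute each element's order explicitly. Writing $\phi_y \in \operatorname{Aut}(P)$ for the conjugation automorphism $\phi_y(x) = yxy^{-1}$, a straightforward induction gives $(xy)^k = \bigl(\prod_{i=0}^{k-1}\phi_y^i(x)\bigr)y^k$ for all $k \ge 1$. Setting $m = o(y)$ and $N_y(x) = \prod_{i=0}^{m-1}\phi_y^i(x) \in P$, this specializes to $(xy)^m = N_y(x)$, reducing everything to understanding the ``norm'' map $N_y$.

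Next I would pin down $o(xy)$. Since $P$ is a $p$-group and $\gcd(p,|F|)=1$, the order $o(N_y(x))$ is a power of $p$ and hence coprime to $m$. The image of $xy$ in $G/P \cong F$ is $y$, which has order $m$, so $m \mid o(xy)$; writing $o(xy) = mt$ and using $(xy)^m = N_y(x)$ forces $t = o(N_y(x))$. Therefore $o(xy) = o(y)\cdot o\bigl(N_y(x)\bigr)$. Substituting into $\rho(G) = \prod_{y \in F}\prod_{x \in P} o(xy)$ and factoring out the $y$-contribution gives
\[
\rho(G) = \rho(F)^{|P|}\cdot \prod_{y \in F}\prod_{x \in P} o\bigl(N_y(x)\bigr),
\]
so it remains to identify the second factor with $\rho(P)^{|C_F(P)|}$.

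To finish, I would split the outer product according to whether $y$ centralizes $P$. If $y \in C_F(P)$ then $\phi_y = \operatorname{id}$ and $N_y(x) = x^m$; since $\gcd(m,|P|)=1$ the map $x \mapsto x^m$ preserves orders, so the inner product equals $\rho(P)$, contributing $\rho(P)^{|C_F(P)|}$ in total. If $y \notin C_F(P)$, I claim $N_y(x) = 1$ for every $x$, making the inner product $1$. Identifying $P$ with $\mathbb{Z}/p^a\mathbb{Z}$, $\phi_y$ is multiplication by a unit $u$ of order $d = o(\phi_y) \mid m$ with $d>1$, and $N_y(x) = \bigl(\sum_{i=0}^{m-1}u^i\bigr)x = \tfrac{m}{d}\bigl(\sum_{i=0}^{d-1}u^i\bigr)x$. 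The crux is that $\sum_{i=0}^{d-1}u^i \equiv 0 \pmod{p^a}$: from $(u-1)\sum_{i=0}^{d-1}u^i = u^d-1 \equiv 0 \pmod{p^a}$ it suffices to know that $u-1$ is a unit modulo $p$, which holds because $u$ has order $d>1$ coprime to $p$ and therefore reduces to a nontrivial element of $(\mathbb{Z}/p\mathbb{Z})^\times$.

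The main obstacle is exactly this last step --- showing that the norm $N_y$ collapses to the trivial map whenever the action is nontrivial. It rests on the structure of $\operatorname{Aut}(P)$ for a cyclic $p$-group: its $p'$-part is cyclic of order dividing $p-1$ and embeds faithfully upon reduction mod $p$, which is what forces $u \not\equiv 1 \pmod p$. For $p=2$ this case is vacuous, since $\operatorname{Aut}(P)$ is then a $2$-group and the odd-order group $F$ must act trivially, giving $C_F(P) = F$ and the direct-product specialization of the formula. Once the vanishing of the norm is in hand, collecting the two cases yields $\prod_{y}\prod_{x} o\bigl(N_y(x)\bigr) = \rho(P)^{|C_F(P)|}$ and hence the stated identity $\rho(G) = \rho(P)^{|C_F(P)|}\rho(F)^{|P|}$.
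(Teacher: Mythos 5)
The paper does not actually prove this lemma: it is imported verbatim from \cite[Lemma 2.6]{baniAust}, so there is no internal argument to compare against, and your proof stands or falls on its own. It stands. The identity $(xy)^k=\bigl(x\,\phi_y(x)\cdots\phi_y^{k-1}(x)\bigr)y^k$ is correct; the order formula $o(xy)=o(y)\cdot o(N_y(x))$ follows exactly as you argue, since the image of $xy$ in $G/P\cong F$ is $y$ (giving $o(y)\mid o(xy)$) and $(xy)^{o(y)}=N_y(x)\in P$ pins down the remaining factor; and splitting the outer product according to $y\in C_F(P)$ or not is the right structural move. The crux, vanishing of the norm $N_y$ when $\phi_y\neq\operatorname{id}$, is handled correctly: $d=o(u)>1$ is coprime to $p$, the kernel of the reduction $(\mathbb{Z}/p^a\mathbb{Z})^\times\to(\mathbb{Z}/p\mathbb{Z})^\times$ is a $p$-group, so $u\not\equiv 1\pmod p$, whence $u-1$ is invertible modulo $p^a$ and $\sum_{i=0}^{d-1}u^i\equiv 0\pmod{p^a}$; your separate disposal of $p=2$ (where the odd-order group $F$ must centralize $P$, so the bad case is vacuous) closes the one place this reasoning would otherwise break. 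What your computation buys, beyond making the paper self-contained at this point, is the sharper structural fact hiding behind the formula: every element of a coset $Py$ with $y\notin C_F(P)$ has order exactly $o(y)$, while a coset with $y\in C_F(P)$ contributes $\rho(P)\,o(y)^{|P|}$; the stated identity is just the product of these coset contributions. One stylistic remark: the early observation that $o(N_y(x))$ is coprime to $m$ is never actually needed (the divisibility argument alone forces $t=o(N_y(x))$), so you could delete it without loss.
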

\begin{lemma}\label{1sylowproductt}	\cite[Lemma 2.4]{baniAust}
	Let $P$ be a cyclic normal Sylow $p$-subgroup of a finite group $G$. Then
	\[\rho(G) \mid	\rho(G/P)^{|P|}\rho(P)^{|G/P|}\]
	with equality if and only if $P$ is central in $G$.
\end{lemma}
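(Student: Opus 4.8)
The plan is to reduce the statement to the exact product formula of Lemma \ref{mid1} by exhibiting $G$ as a semidirect product. Since $P$ is a normal Sylow $p$-subgroup, $|P|$ is the full $p$-part of $|G|$, so the index $|G:P| = |G/P|$ is coprime to $p$. By the Schur--Zassenhaus theorem $P$ admits a complement $F$, i.e.\ $G = P \rtimes F$ with $F$ a Hall $p'$-subgroup, $|F| = |G/P|$, and $G/P \cong F$; in particular $\rho(G/P) = \rho(F)$. I would first dispose of the degenerate cases $P = 1$ and $F = 1$ (equivalently $G = P$), where both sides of the asserted divisibility coincide and $P$ is trivially central.

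Assuming $P \neq 1$ and $|F| > 1$, all hypotheses of Lemma \ref{mid1} are met, since $P$ is a cyclic $p$-group and $(p, |F|) = 1$. Hence
\[
\rho(G) = \rho(P)^{|C_F(P)|}\,\rho(F)^{|P|}.
\]
Rewriting the target right-hand side using $\rho(G/P) = \rho(F)$ and $|G/P| = |F|$ gives $\rho(G/P)^{|P|}\rho(P)^{|G/P|} = \rho(F)^{|P|}\rho(P)^{|F|}$. Since $C_F(P) \leq F$ forces $|C_F(P)| \leq |F|$ (indeed $|C_F(P)|$ divides $|F|$ by Lagrange), the factor $\rho(P)^{|C_F(P)|}$ divides $\rho(P)^{|F|}$, and multiplying by the common factor $\rho(F)^{|P|}$ yields the divisibility $\rho(G) \mid \rho(G/P)^{|P|}\rho(P)^{|G/P|}$.

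For the equality clause, note that with $P \neq 1$ we have $\rho(P) > 1$, so the divisibility above is an equality exactly when $|C_F(P)| = |F|$, that is $C_F(P) = F$, that is $F$ centralizes $P$. Because $P$ is abelian it centralizes itself, so $G = PF$ centralizes $P$ if and only if $F$ does; thus $C_F(P) = F$ is equivalent to $P \leq Z(G)$. This gives the stated characterization of equality.

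The argument is short once Lemma \ref{mid1} is in hand, so the only genuine care is bookkeeping: verifying the coprimality $(p,|F|)=1$ needed to invoke Schur--Zassenhaus and Lemma \ref{mid1}, and correctly translating $C_F(P) = F$ into centrality of $P$ in all of $G$ rather than merely in $F$. The main conceptual obstacle, were Lemma \ref{mid1} not available, would be computing $\rho$ of a semidirect product directly: one would partition $G$ into cosets of $P$, use that each $x$ in a coset $gP$ satisfies $x^{o(gP)} \in P$ with $\gcd(o(gP), |P|) = 1$ to split $o(x)$ into its $p$- and $p'$-parts, and then count how the $p$-part contributions accumulate according to whether the action fixes elements of $P$ --- which is precisely the content encoded by $|C_F(P)|$.
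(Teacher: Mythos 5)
Your proof is correct, but note that the paper itself offers no proof of this statement to compare against: it is imported verbatim as \cite[Lemma 2.4]{baniAust}. Your route --- Schur--Zassenhaus to split $G = P \rtimes F$ with $F \cong G/P$ (legitimate since $P$ is a normal Sylow subgroup, so $\gcd(|P|,|G/P|)=1$), then the exact formula of Lemma \ref{mid1}, then the observations that $|C_F(P)|$ divides $|F|$ and that $\rho(P)>1$ when $P \neq 1$ --- correctly yields both the divisibility and the equality criterion, and you rightly treat the cases $P=1$ and $F=1$ separately, since Lemma \ref{mid1} assumes $|F|>1$. The translation of $C_F(P)=F$ into $P \leq Z(G)$ is also handled correctly via $C_G(P) \supseteq P\,C_F(P) = G$, using that the cyclic (hence abelian) group $P$ centralizes itself. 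One caveat worth recording: in the cited source \cite{baniAust}, this lemma (Lemma 2.4 there) precedes the product formula (Lemma 2.6 there, this paper's Lemma \ref{mid1}), so your derivation reverses the logical order of the original; within the present paper this causes no circularity, because both lemmas are imported as independent black boxes, but it does mean your argument is a reconstruction from a stronger imported result rather than an independent proof of the source's claim --- were Lemma \ref{mid1} unavailable, you would need something like the direct coset-by-coset analysis of $p$-parts and $p'$-parts of element orders that you sketch in your final paragraph.
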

	\begin{lemma}\cite{hallzbMATH02575957} \label{hall} 
	An integer $n = {p_1}^{\alpha_1} \dots {p_k}^{\alpha_k}$
	is the number of Sylow $p$-subgroups of a
	finite solvable group $G$ if and only if  ${p_i}^{\alpha_i} \equiv 1 \pmod{p}$
	for $i = 1, \dots, k$.
\end{lemma}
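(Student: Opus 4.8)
The statement is an equivalence, so I would prove the two implications separately. Write $n_p(G)$ for the number of Sylow $p$-subgroups, so that $n_p(G) = [G : N_G(P)]$ for $P$ a Sylow $p$-subgroup. For \emph{necessity} (that $n = n_p(G)$ forces each prime-power component $\equiv 1 \pmod p$), the plan is to induct on $|G|$ and choose a minimal normal subgroup $N$; since $G$ is solvable, $N$ is elementary abelian, say a $q$-group. If $q = p$, then $N$ lies in every Sylow $p$-subgroup, the Sylow $p$-subgroups of $G$ correspond bijectively to those of $G/N$, so $n_p(G) = n_p(G/N)$ and the inductive hypothesis applies verbatim. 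The substantive case is $q \neq p$, where I would first establish the factorization
\[
n_p(G) = n_p(G/N)\cdot [N : C_N(P)].
\]
This comes from the standard Frattini-argument identity $N_{G/N}(PN/N) = N_G(P)\,N/N$, which gives $[G : N_G(P)N] = n_p(G/N)$, together with the second isomorphism theorem, giving $[N_G(P)N : N_G(P)] = [N : N_N(P)]$, and a short computation using $N \cap P = 1$ that upgrades $N_N(P)$ to $C_N(P)$.

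The crux is the congruence $[N : C_N(P)] \equiv 1 \pmod p$. Since the $p$-group $P$ acts coprimely on the abelian group $N$, the coprime-action decomposition gives $N = C_N(P) \times [N,P]$, so that $[N : C_N(P)] = |[N,P]|$; and because $C_{[N,P]}(P) = 1$, the group $P$ fixes only the identity of $[N,P]$, whence $|[N,P]| \equiv 1 \pmod p$ by the orbit-counting congruence for $p$-group actions. Writing $q^b = [N : C_N(P)] \equiv 1 \pmod p$ and multiplying the inductively known factorization of $n_p(G/N)$ by $q^b$, I would check that every prime-power component remains $\equiv 1 \pmod p$; the only delicate point is when $q$ already divides $n_p(G/N)$ with exponent $\gamma$, where one combines $q^\gamma \equiv 1$ and $q^b \equiv 1$ to conclude $q^{\gamma+b} \equiv 1 \pmod p$.

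For \emph{sufficiency}, given $n = p_1^{\alpha_1}\cdots p_k^{\alpha_k}$ with each $p_i^{\alpha_i}\equiv 1 \pmod p$ (so automatically each $p_i \neq p$), I would build the group directly. For a single factor $q^a = p_i^{\alpha_i}$, the multiplicative group of $\mathbb{F}_{q^a}$ has order $q^a - 1$ divisible by $p$, hence contains an element $\lambda$ of order $p$; multiplication by $\lambda$ is a fixed-point-free $\mathbb{F}_q$-linear automorphism of order $p$ on $V = \mathbb{F}_{q^a}$. Then $H_i = V \rtimes \langle\lambda\rangle$ is solvable with $C_V(\langle\lambda\rangle) = 1$, so $n_p(H_i) = [V : C_V(\langle\lambda\rangle)] = q^a$ by the same factorization used above. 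Setting $G = H_1 \times \cdots \times H_k$ and invoking multiplicativity of $n_p$ over direct products yields $n_p(G) = \prod_i p_i^{\alpha_i} = n$, which completes the construction.

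The main obstacle I anticipate is entirely in the $q \neq p$ case of necessity: assembling the factorization formula cleanly through the Frattini and normalizer bookkeeping, and above all the coprime-action argument delivering $[N : C_N(P)] = |[N,P]| \equiv 1 \pmod p$, together with the care needed to confirm that the per-prime-power congruence survives the inductive multiplication. Solvability is used exactly once but essentially, namely to guarantee that the minimal normal subgroup is an elementary abelian $q$-group; that the conclusion genuinely fails without it is witnessed by $A_5$, where $n_5 = 6 = 2\cdot 3$ has no prime-power factor congruent to $1 \pmod 5$.
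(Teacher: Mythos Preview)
The paper does not prove this lemma at all: it is stated as a citation of Hall's classical 1928 result and used as a black box, so there is no ``paper's own proof'' to compare against. Your proposal is a correct and essentially the standard argument. The necessity direction via induction on a minimal normal subgroup, the factorization $n_p(G)=n_p(G/N)\cdot[N:C_N(P)]$ obtained from the Frattini argument, and the coprime-action decomposition $N=C_N(P)\times[N,P]$ giving $|[N,P]|\equiv 1\pmod p$ are exactly how this is usually done; the sufficiency construction with $\mathbb{F}_{q^a}\rtimes\langle\lambda\rangle$ and the direct product is likewise the standard explicit witness. One tiny cosmetic remark: in the inductive step you might also note the base case explicitly (when $G$ is a $p$-group or trivial, $n_p(G)=1$ and the claim is vacuous), but this is routine.
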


\begin{lemma}\cite{huppert} \label{huppert}
	Let $ G = L_2(q) $ where $ q $ is a ppower ($ p $ prime). Then\\
	(1) a Sylow $ p $-subgroup $ P $ of $ G $ is an elementary abelian group of
	order $ q $ and the number of Sylow $ p $-subgroup of $ G $ is $ q + 1 $,\\
	(2) $ G $ contains a cyclic subgroup $ A $ of order $\dfrac{q-1}{2}  $ such that $ N_G(u) $
	is a dihedral group of order $ q - 1 $ for every nontrivial element
	$ u \in A $,\\
	(3) $ G $ contains a cyclic subgroup $ B $ of order $ \dfrac{q+1}{2} $ such that $ N_G(u) $
	is a dihedral group of order $ q + 1 $ for every nontrivial element
	$ u \in B $,\\
	(4) the set $ \{ P^x, A^x, B^x | x \in G \} $ is a partition of $ G $.
\end{lemma}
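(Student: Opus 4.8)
The plan is to realize $G = L_2(q) = SL_2(q)/\{\pm I\}$ with $q$ odd (so that $\tfrac{q\pm 1}{2}$ are integers) and to read off all four assertions from the linear algebra of $2\times 2$ matrices over $\mathbb{F}_q$. First I would record $|G| = \tfrac12 q(q-1)(q+1)$, so that the $p$-part of $|G|$ is exactly $q$. For (1), take the unipotent subgroup $U = \{\, \bigl(\begin{smallmatrix} 1 & b \\ 0 & 1 \end{smallmatrix}\bigr) : b \in \mathbb{F}_q \,\}$; since $-I \notin U$ it injects into $G$ as an elementary abelian $p$-group of order $q$, hence a Sylow $p$-subgroup $P$. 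To count them I would use the natural action of $G$ on the projective line $\mathbb{P}^1(\mathbb{F}_q)$, whose $q+1$ points are permuted with point-stabiliser the Borel subgroup $N_G(P)$ of order $\tfrac{q(q-1)}{2}$; thus the number of Sylow $p$-subgroups is $[G:N_G(P)] = q+1$ (consistent with $n_p \equiv 1 \pmod p$ and $n_p \mid \tfrac{(q-1)(q+1)}{2}$).

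For (2) and (3) I would exhibit the two classes of maximal tori. The split torus comes from the diagonal subgroup $\{\, \mathrm{diag}(t, t^{-1}) : t \in \mathbb{F}_q^\times \,\}$, cyclic of order $q-1$ in $SL_2(q)$ and hence of order $\tfrac{q-1}{2}$ in $G$; call it $A$. The non-split torus is obtained from the norm-one subgroup of $\mathbb{F}_{q^2}^\times$ (cyclic of order $q+1$) acting $\mathbb{F}_q$-linearly on $\mathbb{F}_{q^2}\cong\mathbb{F}_q^2$, which embeds in $SL_2(q)$ and yields, after passage to $G$, a cyclic group $B$ of order $\tfrac{q+1}{2}$. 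In each case the Weyl reflection $w = \bigl(\begin{smallmatrix} 0 & 1 \\ -1 & 0 \end{smallmatrix}\bigr)$ normalises the torus and inverts it, so for a nontrivial $u$ the centraliser is the full torus containing $u$ and $N_G(\langle u\rangle)$ is a degree-two dihedral extension of it, of order $q-1$ and $q+1$ respectively.

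The crux is part (4). Here I would classify a nonidentity $g \in G$ by the discriminant $\tau^2 - 4$ of the characteristic polynomial $X^2 - \tau X + 1$ of a lift $\tilde g \in SL_2(q)$, where $\tau = \operatorname{tr}(\tilde g)$; note $\tau^2$ is well defined independently of the sign of the lift. If $\tau^2 - 4 = 0$ the element is unipotent and conjugate into $P$; if $\tau^2 - 4$ is a nonzero square in $\mathbb{F}_q$ it is diagonalisable over $\mathbb{F}_q$ and conjugate into $A$; and if $\tau^2 - 4$ is a nonsquare it has eigenvalues in $\mathbb{F}_{q^2}\setminus\mathbb{F}_q$ and is conjugate into $B$. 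These three cases are mutually exclusive, so the conjugates of $P$, $A$, $B$ meet pairwise only in the identity; a final counting check, using the normaliser orders from (2)--(3) to count conjugates, confirms they exhaust $G$, namely $(q+1)(q-1) + \tfrac{q(q+1)}{2}\bigl(\tfrac{q-1}{2}-1\bigr) + \tfrac{q(q-1)}{2}\bigl(\tfrac{q+1}{2}-1\bigr) + 1 = \tfrac{q(q-1)(q+1)}{2} = |G|$.

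I expect the main obstacle to be precisely this partition statement. One must verify that the trace trichotomy is genuinely conjugacy-invariant, that each family $P$, $A$, $B$ is counted with the correct number of conjugates and trivially-intersecting members (so that the subgroups are TI-sets), and that the cardinality bookkeeping closes up exactly to $|G|$ — leaving no element unaccounted for and no element double-counted.
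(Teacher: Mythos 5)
The paper contains no proof of this lemma at all: it is Dickson's classical description of $L_2(q)$, quoted directly from Huppert's \emph{Endliche Gruppen I} (Chapter II, \S 8), so your reconstruction can only be compared with the textbook argument, which it essentially follows: the unipotent subgroup and its Borel normalizer for (1), the split and non-split tori inverted by the Weyl element for (2)--(3), and the discriminant trichotomy plus counting for (4). Your arithmetic does close up:
\[
(q+1)(q-1)+\tfrac{q(q+1)}{2}\Bigl(\tfrac{q-1}{2}-1\Bigr)+\tfrac{q(q-1)}{2}\Bigl(\tfrac{q+1}{2}-1\Bigr)+1=\tfrac{q(q-1)(q+1)}{2}=|G|,
\]
so the skeleton of the proof is sound and, given that the paper supplies nothing, this is a genuine service.

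Two points need repair before this is a proof. First, in part (4) you have the logical roles of the trichotomy and the counting reversed. The trichotomy shows that every nonidentity element lies in \emph{some} conjugate of $P$, $A$ or $B$ (exhaustion), and that conjugates from \emph{different} families meet trivially; it says nothing about two \emph{distinct conjugates of the same} subgroup meeting trivially, which is the delicate point, so your sentence ``these three cases are mutually exclusive, so the conjugates of $P$, $A$, $B$ meet pairwise only in the identity'' is a non sequitur as written. The correct division of labor is: trichotomy gives exhaustion; then, since $N_G(A)$ \emph{contains} a dihedral group of order $q-1$ (and similarly for $B$, $P$), the number of conjugates of $A$ is \emph{at most} $q(q+1)/2$, etc., and the displayed identity combined with exhaustion forces equality throughout --- which simultaneously yields the within-family trivial intersections (the TI property) and, as a by-product, that the normalizers are \emph{exactly} dihedral rather than merely containing a dihedral group. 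This last point matters for (2)--(3) as stated: your sketch only exhibits the lower bound $\langle A, w\rangle \le N_G(\langle u\rangle)$; the upper bound needs either the equality case of the counting just described, or a direct argument (e.g.\ $N_G(A)$ permutes the two fixed points of $A$ on the projective line, with kernel $A$). Second, your intermediate claim that the centralizer of every nontrivial torus element is the torus itself is false for the involution of $A$ (when $q\equiv 1\pmod 4$) or of $B$ (when $q\equiv 3\pmod 4$): its centralizer is the whole dihedral group of order $q\mp 1$. The conclusion $N_G(\langle u\rangle)$ dihedral survives in that case, but the argument as you state it does not, so the involution must be treated separately (or the counting route used uniformly, which sidesteps all such case distinctions, including the degenerate cases $q=5,7,9$ where $A$ or $B$ is very small).
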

\begin{lemma}\cite[Lemma 1]{Xu} \label{seri}
 Let $G$ be a non-solvable group. Then $G$ has a normal series
$1 \unlhd H  \unlhd K \unlhd G$ such that $K/H$ is a direct product of isomorphic non-abelian simple groups
and $|G/K|$ divides  $|Out(K/H)|$.
\end{lemma}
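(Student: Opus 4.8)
The plan is to exhibit the desired factor $K/H$ as the preimage of a single minimal normal subgroup of the quotient of $G$ by its solvable radical, and to force the divisibility condition by choosing $H$ so as to absorb the relevant centralizer. Write $R = \operatorname{Sol}(G)$ for the solvable radical (the largest solvable normal subgroup of $G$) and set $Q = G/R$, with projection $\pi \colon G \to Q$. Since $G$ is non-solvable, $Q \neq 1$, and since $R$ is the largest solvable normal subgroup, $Q$ has trivial solvable radical; in particular $Q$ has no nontrivial abelian normal subgroup. First I would pick a minimal normal subgroup $N$ of $Q$. A minimal normal subgroup is always a direct product of isomorphic simple groups, and $N$ cannot be abelian (an abelian minimal normal subgroup would be a nontrivial solvable normal subgroup of $Q$). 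Hence $N \cong S^n$ for some non-abelian simple group $S$, which is exactly the shape required of the middle factor.

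Next I would record the two centralizer facts that drive the argument. Let $C = C_Q(N)$. Conjugation gives the standard embedding $Q/C \hookrightarrow \operatorname{Aut}(N)$, and under it the image of $N$ is $NC/C \cong N/(N \cap C) = N/Z(N) = N$, which is precisely $\operatorname{Inn}(N)$; here $N \cap C = Z(N) = 1$ because $N$ is centerless, being a direct product of non-abelian simple groups. Passing to the quotient by $\operatorname{Inn}(N)$ then yields an embedding $Q/(NC) \hookrightarrow \operatorname{Aut}(N)/\operatorname{Inn}(N) = \operatorname{Out}(N)$, so that $|Q/(NC)|$ divides $|\operatorname{Out}(N)|$.

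Then I would pull these subgroups back to $G$. Define $H = \pi^{-1}(C)$ and $K = \pi^{-1}(NC)$; both are normal in $G$ (preimages of normal subgroups of a quotient), and $R \le H \le K$, so $1 \unlhd H \unlhd K \unlhd G$ is a normal series. By the correspondence theorem $K/H \cong NC/C \cong N \cong S^n$, so $K/H$ is a direct product of isomorphic non-abelian simple groups. Moreover $G/K \cong Q/(NC)$, which by the previous step embeds in $\operatorname{Out}(N) = \operatorname{Out}(K/H)$; hence $|G/K|$ divides $|\operatorname{Out}(K/H)|$, completing the proof.

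The only genuinely delicate point — and the step I would be most careful with — is the choice of $H$. A naive attempt takes $K/H$ to be an arbitrary non-abelian chief factor of $G$; but then, writing $\bar C = C_{G/H}(K/H)$ and letting $C$ be its preimage in $G$, conjugation only gives $G/(KC) \hookrightarrow \operatorname{Out}(K/H)$, which controls $|G/(KC)|$ rather than $|G/K|$, and the leftover factor $\bar C$ obstructs the stated divisibility. The device of passing to $Q = G/R$ (to guarantee a non-abelian minimal normal subgroup) and then defining $H$ as the full preimage of the centralizer $C_Q(N)$ is exactly what makes $K/H$ self-centralizing in $G/H$, so that $G/K$ injects into the outer automorphism group rather than merely into the full automorphism group. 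The remaining verifications — that $N \cap C = 1$, that the preimages are normal, and that $K/H \cong N$ — are routine applications of the centralizer and correspondence theorems.
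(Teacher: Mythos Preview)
The paper does not prove this lemma at all; it is quoted as \cite[Lemma~1]{Xu} and used as a black box, so there is no in-paper argument to compare against. Your proof is correct and is the standard one: pass to $Q=G/\operatorname{Sol}(G)$, take a minimal normal subgroup $N\cong S^n$ of $Q$, set $C=C_Q(N)$, and pull back $H=\pi^{-1}(C)$, $K=\pi^{-1}(NC)$; then $K/H\cong NC/C\cong N$ and $G/K\cong Q/NC\hookrightarrow\operatorname{Out}(N)$, giving both the structure of $K/H$ and the divisibility of $|G/K|$. Your closing remark about why $H$ must be chosen as the full preimage of the centralizer (so that $K/H$ is self-centralizing in $G/H$) is exactly the point, and all the auxiliary checks ($N\cap C=Z(N)=1$, normality of the preimages, the second-isomorphism identification) are routine as you say.
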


	\section{\bf On the function  $ \Omega_{\rho}(G) $}
	\begin{notation}
		Let  $n={p_1}^{\alpha_1}  {p_2}^{\alpha_2} \cdots  {p_k}^{\alpha_k}$,
		where $p_1, p_2, \cdots, p_k$
		are distinct prime numbers.
		We set $[n]_{p_i}=\alpha_i$, where $i=1, 2, \cdots, k$.
	\end{notation}
	
		\begin{lemma} \cite[Lemma 2.2 and Lemma 2.4]{baniMalArab} \label{mohem}  
		Let $G$ be a finite group of order $kp^{\alpha}$ such that  $(k,p)=1$ and $p$ is a  prime  number. Then
		 \[
		k \mid [\rho(G)]_p \quad \text{and} \quad \phi(p) \mid [\rho(G)]_p.
		\]
	\end{lemma}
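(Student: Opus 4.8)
The plan is to work directly with the additive decomposition of the $p$-adic valuation of $\rho(G)$. Writing $v_p$ for the $p$-adic valuation, observe that since $\Omega$ (hence each $[\,\cdot\,]_p$) is completely additive,
\[
[\rho(G)]_p = v_p\Bigl(\prod_{x\in G} o(x)\Bigr) = \sum_{x\in G} v_p(o(x)).
\]
The key step is the telescoping identity $v_p(o(x)) = \#\{\, j\ge 1 : p^{j}\mid o(x)\,\}$, which after interchanging the order of summation yields
\[
[\rho(G)]_p = \sum_{j=1}^{\alpha} N_j, \qquad N_j := \bigl|\{\, x\in G : p^{j}\mid o(x)\,\}\bigr|,
\]
the sum stopping at $\alpha$ because $o(x)\mid |G|=kp^{\alpha}$ forces $v_p(o(x))\le\alpha$. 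I would then establish the two divisibilities by analysing the single quantity $N_j$ in two different ways.

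For $\phi(p)\mid [\rho(G)]_p$, I would group elements by their exact order, using that the number of elements of order $d$ equals $\phi(d)$ times the number of cyclic subgroups of order $d$. Hence
\[
N_j = \sum_{\substack{d\mid \exp(G)\\ p^{j}\mid d}} \bigl|\{\, x\in G : o(x)=d\,\}\bigr|
\]
is a sum in which every index $d$ is divisible by $p$; since $p\mid d$ forces $\phi(p)=p-1$ to divide $\phi(d)$ (multiplicativity of Euler's totient), each summand, and therefore each $N_j$, is a multiple of $\phi(p)$. Summing over $j$ gives $\phi(p)\mid [\rho(G)]_p$.

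For $k\mid [\rho(G)]_p$, I would instead count the complementary set and invoke Frobenius's theorem. Because $\gcd(k,p)=1$, an element $x$ satisfies $p^{j}\nmid o(x)$ exactly when $o(x)\mid k p^{\,j-1}$, i.e.\ when $x^{k p^{j-1}}=1$; writing $f(m)=|\{x\in G : x^{m}=1\}|$ this gives $N_j = kp^{\alpha} - f(kp^{\,j-1})$. For $1\le j\le \alpha$ the integer $kp^{\,j-1}$ divides $|G|=kp^{\alpha}$, so Frobenius's theorem yields $kp^{\,j-1}\mid f(kp^{\,j-1})$, whence $k\mid N_j$ for every $j$. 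Summing over $j$ then delivers $k\mid [\rho(G)]_p$.

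The main obstacle is the $k$-divisibility: the elementary count-by-order argument that settles the $\phi(p)$ part gives no control over the prime-to-$p$ factor $k$, so the proof must pass to the complementary counting function $f$ and draw on the arithmetic input of Frobenius's theorem, together with the hypothesis $\gcd(k,p)=1$ (which is precisely what guarantees the equivalence $p^{j}\nmid o(x)\iff o(x)\mid kp^{\,j-1}$). The $\phi(p)$ part, by contrast, reduces to the multiplicativity of Euler's totient and needs no deep input.
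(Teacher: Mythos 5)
Your proof is correct: the interchange giving $[\rho(G)]_p=\sum_{j=1}^{\alpha}N_j$ with $N_j=|\{x\in G: p^j\mid o(x)\}|$, the generator-counting argument for the $\phi(p)$ part (elements of order $d$ come in blocks of size $\phi(d)$, and $p\mid d$ forces $(p-1)\mid\phi(d)$), and the application of Frobenius's theorem to $f(kp^{j-1})$ for the $k$ part are all sound, with the hypothesis $\gcd(k,p)=1$ invoked exactly where it is needed, namely for the equivalence $p^j\nmid o(x)\iff x^{kp^{j-1}}=1$. Note that the paper under review does not actually prove this lemma -- it imports it from the authors' companion paper [Lemma 2.2 and Lemma 2.4, baniMalArab] -- and your argument (Frobenius's theorem plus totient counting) is the standard one in this line of work, so your proposal legitimately supplies the proof that the paper leaves as a citation.
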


	\begin{lemma}\cite[Corollary 2.6]{baniMalArab} \label{2-part}
	For a finite group $G$,  $[\rho(G)]_p$ is even, where $p$ is an odd prime number and $p \mid |G|$. 
	Moreover, if $2 $ divides $|G|$, then $[\rho(G)]_2$ is odd.
\end{lemma}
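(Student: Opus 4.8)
The plan is to compute the $p$-part $[\rho(G)]_p$ directly from the definition $\rho(G)=\prod_{x\in G}o(x)$. Since the exponent map $[\,\cdot\,]_p$ is completely additive (it is just the $p$-adic valuation), one has $[\rho(G)]_p = \sum_{x\in G}[o(x)]_p$, and the entire argument rests on a single device: the inversion permutation $\iota\colon G\to G$ given by $x\mapsto x^{-1}$. First I would record the orbit structure of $\iota$. Its fixed points are precisely the solutions of $x^2=1$, while every other element lies in an orbit $\{x,x^{-1}\}$ of size $2$. Because $o(x)=o(x^{-1})$, each such size-$2$ orbit contributes $2[o(x)]_p$ to the sum, an even amount. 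Hence, working modulo $2$, the sum collapses to $[\rho(G)]_p \equiv \sum_{x^2=1}[o(x)]_p \pmod 2$, and every surviving term comes from an element with $o(x)\in\{1,2\}$.

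From here the two cases are immediate. For an odd prime $p$, both $1$ and $2$ are coprime to $p$, so $[o(x)]_p=0$ for every fixed point; the residual sum vanishes and $[\rho(G)]_p$ is even (this in fact holds whether or not $p\mid|G|$, the case $p\nmid|G|$ being trivial since then $[\rho(G)]_p=0$). For $p=2$, a fixed point contributes $[o(x)]_2=1$ exactly when $x$ is an involution and $0$ when $x=1$, so $[\rho(G)]_2$ is congruent modulo $2$ to the number of involutions of $G$. To finish I would count involutions modulo $2$, and here the same permutation $\iota$ does the work a second time: partitioning $G$ into $\iota$-orbits gives $|G|\equiv \#\{x\in G : x^2=1\}\pmod 2$, so when $2\mid|G|$ the set $\{x : x^2=1\}$ has even cardinality and therefore, after discarding the identity, contains an odd number of involutions. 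Thus $[\rho(G)]_2$ is odd.

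The only genuine content is the parity of the involution count, which I expect to be the crux of the statement; the attractive feature of this approach is that it follows from the very inversion map already used to discard the size-$2$ orbits, so no separate ingredient (such as Frobenius's theorem on the solutions of $x^n=1$) is needed. Everything else is the bookkeeping of splitting a completely additive sum across the orbits of $\iota$, which is why the even/odd dichotomy between odd primes and the prime $2$ arises so cleanly from the single value $[2]_p$.
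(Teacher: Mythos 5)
Your proof is correct and complete. One structural point before comparing: the paper does not actually prove this lemma --- it is imported verbatim from \cite[Corollary 2.6]{baniMalArab} --- so there is no internal proof to measure against, only the approach suggested by the machinery the paper does import. In that framework the odd-prime half is immediate from Lemma \ref{mohem}: $\phi(p)=p-1$ divides $[\rho(G)]_p$, and $p-1$ is even for odd $p$. That route, however, says nothing about $p=2$ (where $\phi(2)=1$), and the genuine content there is exactly what you isolated: a group of even order has an odd number of involutions. Your inversion-pairing argument is attractive precisely because one device does everything: it collapses $[\rho(G)]_p$ modulo $2$ onto the set $\{x \in G: x^2=1\}$, which disposes of odd $p$ outright (and shows the hypothesis $p\mid |G|$ is superfluous in that half), and the same orbit count applied to $|G|$ yields the involution parity, with no appeal to Frobenius-type counting theorems or to the external reference. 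What the divisibility route buys instead is a conclusion for odd $p$ much stronger than evenness, namely $(p-1)\mid [\rho(G)]_p$ and $k\mid [\rho(G)]_p$, and it is that stronger fact which drives the paper's later arguments. Note also that where the paper invokes Lemma \ref{2-part} (in the theorems on groups of order $660$ and $1092$, to rule out $\alpha_2$ being twice its minimal value), it is only the oddness of $[\rho(G)]_2$ that is needed, and your argument delivers exactly that; so your proof would fully support every use of the lemma in this paper.
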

	Obviously we see that:
	\begin{lemma}
		Let $G$ be a finite group.
		\begin{itemize}
			\item If $H \leqslant G$, then $\Omega_{\rho}(H) \leq \Omega_{\rho}(G)$, 	with equality if and only if $H = G$. 
			\item  If $N \unlhd G$, then $\Omega_{\rho}(G/N) \leq \Omega_{\rho}(G)$, 	with equality if and only if $N = 1$.
		\end{itemize}
	\end{lemma}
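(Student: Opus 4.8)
The plan is to reduce both items to one elementary fact about the completely additive function $\Omega$: if $a \mid b$ for positive integers $a, b$, then writing $b = ac$ gives $\Omega(b) = \Omega(a) + \Omega(c) \ge \Omega(a)$, with equality precisely when $c = 1$, that is, when $a = b$. Thus for each part I only need to pin down a divisibility relation between the relevant products of element orders and then read off when the cofactor is trivial.

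For the subgroup statement, I would begin from the factorization
\[
\rho(G) = \prod_{x \in G} o(x) = \Big(\prod_{x \in H} o(x)\Big)\Big(\prod_{x \in G \setminus H} o(x)\Big) = \rho(H) \cdot \prod_{x \in G \setminus H} o(x),
\]
which exhibits $\rho(H) \mid \rho(G)$ at once, whence $\Omega_{\rho}(H) \le \Omega_{\rho}(G)$. The cofactor $\prod_{x \in G \setminus H} o(x)$ equals $1$ exactly when every element of $G \setminus H$ has order $1$; since the identity already lies in $H$, this forces $G \setminus H = \emptyset$, i.e. $H = G$. Hence equality holds if and only if $H = G$.

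For the quotient statement, the key observation is that $o(xN) \mid o(y)$ for every $y$ in the coset $xN$: indeed $yN = xN$ gives $o(yN) = o(xN)$, while $o(yN) \mid o(y)$ holds in any quotient. Consequently each coset contributes $o(xN)^{|N|} \mid \prod_{y \in xN} o(y)$, and multiplying over all $|G/N|$ cosets yields
\[
\rho(G/N)^{|N|} = \prod_{xN \in G/N} o(xN)^{|N|} \ \big|\ \prod_{xN \in G/N}\prod_{y \in xN} o(y) = \rho(G).
\]
Applying $\Omega$ gives $|N| \cdot \Omega_{\rho}(G/N) \le \Omega_{\rho}(G)$, and since $|N| \ge 1$ this already yields $\Omega_{\rho}(G/N) \le \Omega_{\rho}(G)$. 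For the equality case, if $\Omega_{\rho}(G/N) = \Omega_{\rho}(G)$ then substituting into the last inequality gives $(|N| - 1)\,\Omega_{\rho}(G) \le 0$; as $|N| \ge 1$ and $\Omega_{\rho}(G) \ge 0$, this forces either $|N| = 1$ or $\Omega_{\rho}(G) = 0$ (the latter only for the trivial group, where again $N = 1$). Thus equality implies $N = 1$, and the converse is immediate since $G/N \cong G$.

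I expect no genuine obstacle, as the statement is flagged by the authors as routine. The only point deserving care is the equality analysis in the quotient case: one should carry the exponent $|N|$ in the divisibility $\rho(G/N)^{|N|} \mid \rho(G)$ — rather than the weaker $\rho(G/N) \mid \rho(G)$ — so that the resulting linear inequality $|N| \cdot \Omega_{\rho}(G/N) \le \Omega_{\rho}(G)$ cleanly forces $N = 1$ at equality.
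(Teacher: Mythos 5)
Your proof is correct. The paper itself offers no argument for this lemma (it is introduced with ``Obviously we see that''), so there is nothing to compare against beyond the routine verification the authors had in mind; yours is a complete and careful version of exactly that. Both divisibility reductions are sound: $\rho(H) \mid \rho(G)$ via splitting the product over $G$ into $H$ and $G\setminus H$, and $\rho(G/N)^{|N|} \mid \rho(G)$ via $o(xN) \mid o(y)$ for every $y$ in the coset $xN$. Your handling of the quotient equality case is in fact slightly stronger than needed: the intermediate inequality $|N|\cdot\Omega_{\rho}(G/N) \leq \Omega_{\rho}(G)$ is of the same flavor as the paper's Lemma 2.11 (the Sylow quotient bound), and it settles the equality analysis cleanly, whereas the weaker divisibility $\rho(G/N) \mid \rho(G)$ alone would have required a separate argument that the nontrivial elements of $N$ force a strict gap.
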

	\begin{lemma} We have:
		\begin{enumerate}
			\item[(i)] $\Omega_{\rho}\left(C_{p^\alpha}\right)=\alpha p^{\alpha}-\dfrac{p^{\alpha}-1}{p-1}={\frac{\alpha p^{\alpha+1}-(\alpha+1) p^\alpha+1}{p-1}}$
			\item[(ii)] 
			$\Omega_{\rho}\left(C_{p^\alpha} \times C_{p^\beta}\right)={\frac{\beta p^{\alpha+\beta+2}-p^{\alpha+\beta+1}-(\beta+1) p^{\alpha+\beta}+p^{2 \alpha+1}+1}{p^2-1}}$
			\item[(iii)]
			$\Omega_{\rho}(L_2(q))={\frac{q(q+1)}{2}}{\Omega_{\rho}(C_{\frac{q-1}{2}})}+ {\frac{q(q-1)}{2}} {\Omega_{\rho}(C_{\frac{q+1}{2}})}+ {(q+1)(q-1)}.$
		\end{enumerate}
	\end{lemma}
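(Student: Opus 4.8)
The unifying observation is that $\Omega$ is completely additive, so that for any finite group $G$ one has $\Omega_{\rho}(G)=\Omega\!\left(\prod_{x\in G}o(x)\right)=\sum_{x\in G}\Omega(o(x))$, and since $\Omega(o(1))=\Omega(1)=0$ the identity never contributes. For the two $p$-group statements (i) and (ii) I would exploit the layer identity $\Omega(p^{j})=\#\{\,i\ge 1:\ p^{i}\mid p^{j}\,\}$, which permits swapping the order of summation: writing $N_i=\#\{\,x\in G:\ p^{i}\mid o(x)\,\}$, we get $\Omega_{\rho}(G)=\sum_{i\ge 1}N_i$. Everything then reduces to counting, for each $i$, the elements whose order is divisible by $p^{i}$, followed by evaluating a finite geometric series.

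For (i), in $C_{p^{\alpha}}$ the elements whose order divides $p^{i-1}$ form the unique subgroup of that order, so $N_i=p^{\alpha}-p^{i-1}$ for $1\le i\le\alpha$ and $N_i=0$ otherwise. Hence $\Omega_{\rho}(C_{p^{\alpha}})=\sum_{i=1}^{\alpha}\bigl(p^{\alpha}-p^{i-1}\bigr)=\alpha p^{\alpha}-\frac{p^{\alpha}-1}{p-1}$, and clearing the denominator $p-1$ gives the second displayed form; this part is essentially immediate. For (ii) I would first fix the convention $\alpha\le\beta$ (the stated expression is not symmetric in $\alpha,\beta$, so this ordering is the operative one). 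Writing $o(x)=p^{a}$ and $o(y)=p^{b}$, the order of $(x,y)$ is $p^{\max(a,b)}$, so $p^{i}\nmid o\bigl((x,y)\bigr)$ exactly when both $o(x)$ and $o(y)$ divide $p^{i-1}$; counting these gives $N_i=p^{\alpha+\beta}-\min(p^{i-1},p^{\alpha})\min(p^{i-1},p^{\beta})$. This splits into the range $1\le i\le\alpha$, where $N_i=p^{\alpha+\beta}-p^{2i-2}$, the range $\alpha<i\le\beta$, where $N_i=p^{\alpha+\beta}-p^{\alpha+i-1}$, and $N_i=0$ for $i>\beta$. Summing the two geometric pieces and consolidating everything over the common denominator $p^{2}-1$ yields the claimed formula. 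The bookkeeping across the two ranges and the ensuing algebraic consolidation is the main (if routine) obstacle here, since one must check that the cross terms $\pm p^{2\alpha}$ cancel to leave exactly the stated numerator.

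For (iii) I would invoke the partition in Lemma \ref{huppert}(4): every nontrivial element of $L_2(q)$ lies in exactly one conjugate of $P\setminus\{1\}$, of $A\setminus\{1\}$, or of $B\setminus\{1\}$. Because the identity contributes $0$, the sum $\Omega_{\rho}(L_2(q))=\sum_{x}\Omega(o(x))$ breaks into three independent blocks. By Lemma \ref{huppert}(1) the Sylow $p$-subgroup $P$ is elementary abelian of order $q$, so each of its $q-1$ nontrivial elements has order $p$ and contributes $\Omega(p)=1$; with $q+1$ conjugates this block equals $(q+1)(q-1)$. Taking $u$ to be a generator of the cyclic group $A$ of order $\frac{q-1}{2}$, so that $\langle u\rangle=A$, Lemma \ref{huppert}(2) identifies $N_G(A)$ with the dihedral group of order $q-1$; hence $A$ has $[G:N_G(A)]=\frac{q(q+1)}{2}$ conjugates, each contributing $\sum_{u\in A}\Omega(o(u))=\Omega_{\rho}(C_{(q-1)/2})$, which gives the first block. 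The identical computation applied to $B$ (normalizer dihedral of order $q+1$ by Lemma \ref{huppert}(3), hence $\frac{q(q-1)}{2}$ conjugates) produces the second block, and adding the three blocks yields the identity. The only delicate point is confirming that the dihedral groups of Lemma \ref{huppert} are the full normalizers $N_G(A)$ and $N_G(B)$, which is exactly why one chooses $u$ to be a generator so that $N_G(\langle u\rangle)=N_G(A)$ directly.
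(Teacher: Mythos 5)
Your proof is correct, but on parts (i) and (ii) it takes a genuinely different route from the paper: the paper does not prove these two formulas at all, it simply cites them from an external reference (\cite[Example]{TzbMATH06233336}), whereas you derive them from scratch via the layer-counting identity $\Omega_{\rho}(G)=\sum_{i\ge 1}N_i$ with $N_i=\#\{x\in G:\ p^i\mid o(x)\}$. This buys self-containedness and, importantly, makes explicit the hypothesis $\alpha\le\beta$ that the stated (asymmetric) formula in (ii) silently requires --- a point the paper glosses over entirely; the cost is the geometric-series bookkeeping you describe, which does close correctly (the $\pm p^{2\alpha}$ cross terms cancel and the numerator comes out exactly as claimed --- I checked the consolidation over the denominator $p^2-1$). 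On part (iii) you and the paper take essentially the same route: both rest on the partition of Lemma \ref{huppert}(4). The paper states the multiplicative consequence $\rho(L_2(q))=\rho(C_{(q-1)/2})^{q(q+1)/2}\,\rho(C_{(q+1)/2})^{q(q-1)/2}\,p^{(q+1)(q-1)}$ and then applies complete additivity of $\Omega$, leaving the conjugate counts implicit; you carry out the same decomposition additively and justify the exponents $\frac{q(q+1)}{2}=[G:N_G(A)]$ and $\frac{q(q-1)}{2}=[G:N_G(B)]$ via the dihedral normalizers of Lemma \ref{huppert}(2)--(3), which is precisely the verification the paper omits. In short: (iii) matches the paper's argument with more detail supplied, while (i)--(ii) replace a citation by an elementary direct computation.
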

	\begin{proof}
 Parts (i) and (ii) follow directly from \cite[Example]{TzbMATH06233336}.

For part (iii), Lemma \ref{huppert} implies that
		$$\rho(L_2(q))={\rho(C_{\frac{q-1}{2}})}^{\frac{q(q+1)}{2}} {\rho(C_{\frac{q+1}{2}})}^{\frac{q(q-1)}{2}}  p^{(q+1)(q-1)}.$$
		Consequently,
		$$\Omega_{\rho}(L_2(q))={\frac{q(q+1)}{2}}{\Omega_{\rho}(C_{\frac{q-1}{2}})}+ {\frac{q(q-1)}{2}} {\Omega_{\rho}(C_{\frac{q+1}{2}})}+ {(q+1)(q-1)}.$$
	\end{proof}

	\begin{lemma}\label{prime}
		Let $G$  be a finite group. Then $|G| \leq 1+\Omega_{\rho}(G)$, with equality holds if and only if 
		every non-trivial element of \( G \) has prime order. 
	\end{lemma}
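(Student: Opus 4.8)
The plan is to exploit the fact, emphasized earlier in the introduction, that the prime omega function $\Omega$ is completely additive: $\Omega(mn) = \Omega(m) + \Omega(n)$ for all positive integers $m, n$. Since $\rho(G) = \prod_{x \in G} o(x)$ is a product over the group, complete additivity immediately converts this product into a sum of local contributions, namely
\[
\Omega_{\rho}(G) = \Omega\!\left(\prod_{x \in G} o(x)\right) = \sum_{x \in G} \Omega(o(x)).
\]
This reformulation is the crux of the whole argument, and once it is in place the inequality is essentially a counting exercise.

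Next I would isolate the trivial element from the rest. Since $o(1) = 1$ and $\Omega(1) = 0$, the identity contributes nothing to the sum. For every non-trivial $x \in G$ we have $o(x) \geq 2$, so $o(x)$ has at least one prime factor and hence $\Omega(o(x)) \geq 1$. Summing these estimates over the $|G| - 1$ non-identity elements gives
\[
\Omega_{\rho}(G) = \sum_{x \neq 1} \Omega(o(x)) \;\geq\; \sum_{x \neq 1} 1 \;=\; |G| - 1,
\]
which is exactly the asserted bound $|G| \leq 1 + \Omega_{\rho}(G)$.

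For the equality characterization, I would track precisely when each term-by-term estimate $\Omega(o(x)) \geq 1$ is tight. Equality in the summed inequality forces $\Omega(o(x)) = 1$ for every non-trivial $x$. Here I invoke the elementary number-theoretic fact that for a positive integer $n$ one has $\Omega(n) = 1$ if and only if $n$ is prime. Thus equality holds if and only if $o(x)$ is prime for every $x \neq 1$, i.e.\ every non-trivial element of $G$ has prime order, as claimed.

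I do not anticipate a genuine obstacle in this argument; the entire proof rests on recognizing that complete additivity linearizes $\rho(G)$ into $\sum_x \Omega(o(x))$. The only point requiring a moment of care is the equality direction, where one must verify that the summed inequality is tight exactly when each summand is tight — a standard consequence of the fact that a sum of non-negative integers each at least $1$ equals its count if and only if every summand equals $1$.
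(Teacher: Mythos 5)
Your proof is correct and is precisely the argument the paper has in mind: the paper dismisses this lemma with ``The proof is straightforward,'' and your write-up --- linearizing $\Omega_{\rho}(G)=\sum_{x\in G}\Omega(o(x))$ by complete additivity, bounding each non-trivial term below by $1$, and noting that equality forces $\Omega(o(x))=1$, i.e.\ prime order, for all $x\neq 1$ --- is exactly the omitted routine verification. No gap; your version simply makes explicit what the paper leaves implicit.
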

\begin{proof}
 The proof is straightforward.
\end{proof}
	\begin{theorem}
	Let $G$ be a finite group of order $n$. Then $\Omega_{\rho}(G) \leq \Omega_{\rho}(C_n)$
	with equality if and only if $G$ is cyclic.
\end{theorem}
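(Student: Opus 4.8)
The plan is to reduce the statement to the order-dominating bijection of Lemma~\ref{amiri} together with the complete additivity of $\Omega$. By Lemma~\ref{amiri} there is a bijection $f\colon G \to C_n$ with $o(x)\mid o(f(x))$ for every $x\in G$. Fixing a prime $p$ and writing $[\,\cdot\,]_p$ for the $p$-adic valuation, the divisibility $o(x)\mid o(f(x))$ gives $[o(x)]_p \le [o(f(x))]_p$ for each $x$. Summing over $x\in G$ and using that $f$ is a bijection, I would obtain $[\rho(G)]_p = \sum_{x\in G}[o(x)]_p \le \sum_{x\in G}[o(f(x))]_p = [\rho(C_n)]_p$ for every prime $p$, i.e. $\rho(G)\mid \rho(C_n)$. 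Since $\Omega$ is completely additive, $a\mid b$ implies $\Omega(a)\le \Omega(b)$, and therefore $\Omega_{\rho}(G)=\Omega(\rho(G)) \le \Omega(\rho(C_n))=\Omega_{\rho}(C_n)$, which is the desired inequality.

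For the equality statement, suppose $\Omega_{\rho}(G)=\Omega_{\rho}(C_n)$. Combined with $\rho(G)\mid \rho(C_n)$, the quotient $\rho(C_n)/\rho(G)$ is a positive integer satisfying $\Omega\!\left(\rho(C_n)/\rho(G)\right) = \Omega_{\rho}(C_n)-\Omega_{\rho}(G)=0$, which forces $\rho(C_n)=\rho(G)$. By Lemma~\ref{cyclic}, the equality $\rho(G)=\rho(C_n)$ holds if and only if $G$ is cyclic, so equality in $\Omega_{\rho}$ forces $G\cong C_n$. The converse direction is immediate, since $G\cong C_n$ gives $\rho(G)=\rho(C_n)$ and hence $\Omega_{\rho}(G)=\Omega_{\rho}(C_n)$.

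I do not expect a serious obstacle here, as the result is essentially a corollary of Lemma~\ref{amiri} and Lemma~\ref{cyclic}. The one point that genuinely requires care is passing from the pointwise divisibility $o(x)\mid o(f(x))$ to the global divisibility $\rho(G)\mid \rho(C_n)$; this is precisely where the complete additivity of $\Omega$ (equivalently, the additivity of $p$-adic valuations over products) does the work, letting a bijection that only controls divisibility termwise control the aggregate quantity $\Omega_{\rho}$.
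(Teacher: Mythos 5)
Your proof is correct and takes essentially the same route as the paper: Lemma \ref{amiri}'s bijection gives $\rho(G)\mid\rho(C_n)$ (the paper passes to the product directly rather than through $p$-adic valuations), whence $\Omega_{\rho}(G)\le\Omega_{\rho}(C_n)$, and the equality case is settled by Lemma \ref{cyclic}. If anything, you are slightly more careful than the paper at the equality step, noting explicitly that $\Omega_{\rho}(G)=\Omega_{\rho}(C_n)$ together with the divisibility $\rho(G)\mid\rho(C_n)$ forces $\rho(G)=\rho(C_n)$ before invoking Lemma \ref{cyclic}, a point the paper leaves implicit.
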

\begin{proof}
	By Lemma \ref{amiri}, there exists a bijection $ f $ from $ G $ onto $ C_n $
	such that $ o(x) | o(f(x)) $ for all $ x \in G $. Therefore
	\begin{align*}
		\forall x \in G; o(x) | o(f(x)) & \Rightarrow \prod_{x \in G} o(x) \mid \prod_{x \in G} o(f(x)) \\
		& \Rightarrow \rho(G) | \rho(C_n) \Rightarrow \Omega_{\rho}(G) \leq \Omega_{\rho}(C_n).
	\end{align*}
By Lemma \ref{cyclic}, equality holds if and only if $ G=C_n $.
\end{proof}

\begin{lemma} \label{directt}
	Let  $A$ and $B$ be finite groups. Then 
	   \begin{enumerate}
		\item[(1)] \( \rho(A \times B) \mid \rho(A)^{|B|} \rho(B)^{|A|} \). \\
		Moreover, \( \rho(A \times B) = \rho(A)^{|B|} \rho(B)^{|A|} \) if and only if \( \gcd(|A|, |B|) = 1 \).
		
		\item[(2)] \( \Omega_{\rho}(A \times B) \leq \Omega_{\rho}(A) \cdot |B| + \Omega_{\rho}(B) \cdot |A| \). \\
		Equality \( \Omega_{\rho}(A \times B) = \Omega_{\rho}(A) \cdot |B| + \Omega_{\rho}(B) \cdot |A| \) holds if and only if \( \gcd(|A|, |B|) = 1 \).
	\end{enumerate}
\end{lemma}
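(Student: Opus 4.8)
The plan is to reduce everything to the elementary fact that an element $(a,b)$ of a direct product has order $o((a,b))=\operatorname{lcm}(o(a),o(b))$, combined with the identity $\operatorname{lcm}(m,n)\cdot\gcd(m,n)=mn$. Part (2) will then follow purely formally from part (1) by applying the completely additive function $\Omega$.

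First I would prove the divisibility in part (1). Unwinding the definition,
$$\rho(A\times B)=\prod_{(a,b)\in A\times B} o((a,b))=\prod_{a\in A}\prod_{b\in B}\operatorname{lcm}\bigl(o(a),o(b)\bigr).$$
Since $\operatorname{lcm}(o(a),o(b))\mid o(a)\,o(b)$ for every pair, and since $m_i\mid n_i$ for all $i$ forces $\prod_i m_i\mid\prod_i n_i$, we obtain $\rho(A\times B)\mid\prod_{(a,b)}o(a)\,o(b)$. A direct count, in which each $o(a)$ is repeated $|B|$ times and each $o(b)$ is repeated $|A|$ times, gives
$$\prod_{(a,b)\in A\times B}o(a)\,o(b)=\Bigl(\prod_{a\in A}o(a)\Bigr)^{|B|}\Bigl(\prod_{b\in B}o(b)\Bigr)^{|A|}=\rho(A)^{|B|}\rho(B)^{|A|},$$
which establishes the first assertion of (1).

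For the equality condition I would compute the quotient explicitly rather than argue factor by factor. Using $o(a)\,o(b)/\operatorname{lcm}(o(a),o(b))=\gcd(o(a),o(b))$ termwise,
$$\frac{\rho(A)^{|B|}\rho(B)^{|A|}}{\rho(A\times B)}=\prod_{(a,b)\in A\times B}\gcd\bigl(o(a),o(b)\bigr),$$
a positive integer that equals $1$ exactly when $\gcd(o(a),o(b))=1$ for all $a\in A$ and $b\in B$. If $\gcd(|A|,|B|)=1$, then $o(a)\mid|A|$ and $o(b)\mid|B|$ force every such gcd to be $1$, so equality holds. Conversely, if a prime $p$ divides $\gcd(|A|,|B|)$, then Cauchy's theorem supplies $a\in A$ and $b\in B$ of order $p$, whence $\gcd(o(a),o(b))=p>1$ and equality fails. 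This completes (1).

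Part (2) is then immediate. Applying $\Omega$ to the divisibility of (1) and using its monotonicity under divisibility yields
$$\Omega_{\rho}(A\times B)\le\Omega\bigl(\rho(A)^{|B|}\rho(B)^{|A|}\bigr)=|B|\,\Omega_{\rho}(A)+|A|\,\Omega_{\rho}(B).$$
For the equality statement, I would use that whenever $m\mid n$ one has $\Omega(m)=\Omega(n)$ if and only if $m=n$, since otherwise $n/m>1$ contributes at least one prime factor; hence equality in (2) is equivalent to $\rho(A\times B)=\rho(A)^{|B|}\rho(B)^{|A|}$, which by part (1) holds precisely when $\gcd(|A|,|B|)=1$. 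I do not anticipate a serious obstacle here: the only delicate point is the equality analysis, and the clean device is to exhibit the quotient as the single product $\prod\gcd(o(a),o(b))$ and to invoke Cauchy's theorem for the converse direction.
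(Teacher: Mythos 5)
Your proof is correct and follows essentially the same route as the paper: divisibility comes from $o((a,b))=\operatorname{lcm}(o(a),o(b))\mid o(a)o(b)$, and part (2) follows by applying the completely additive function $\Omega$ to part (1). Your treatment of the equality cases is in fact slightly more careful than the paper's (you make explicit the appeal to Cauchy's theorem, the quotient $\prod\gcd(o(a),o(b))$, and the fact that $m\mid n$ with $\Omega(m)=\Omega(n)$ forces $m=n$), but the underlying argument is the same.
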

\begin{proof}
(1)
For any \( (a, b) \in A \times B \), we have
$o((a, b)) = \operatorname{lcm}(o(a), o(b)) \mid o(a) o(b)$.
Thus,
$$
	\rho(A \times B) = \prod_{(a, b) \in A \times B}  o((a, b)) 
 \mid \prod_{a \in A} \prod_{b \in B} o(a) o(b)  =\rho(A)^{|B|} \rho(B)^{|A|}
$$

We know that, $\operatorname{gcd}(|A|,|B|)=1$ if and only if $\operatorname{gcd}(o(a), o(b))=1$ for every $a \in A$ and $b \in B$, which is equivalent to
 $o((a, b))=o(a) o(b)$ for every $a \in A$ and $b \in B$. Therefore 
 $\rho(A \times B) = \rho(A)^{|B|} \rho(B)^{|A|}$ if and only if $\operatorname{gcd}(|A|,|B|)=1$. \\
(2) From part (1), we deduce
 \[\Omega_{\rho}(A\times B) \leqslant
 \Omega_{\rho}({\rho(A)}^{|B|}{\rho(B)}^{|A|}) = \Omega_{\rho}({\rho(A)}^{|B|}) + \Omega_{\rho}({\rho(B)}^{|A|}) = 
 \Omega_{\rho}(A) \cdot |B| + \Omega_{\rho}(B) \cdot |A|.
 \]
 Equality holds precisely when \( \gcd(|A|, |B|) = 1 \), as established in part (1),
 as required.
\end{proof}

\begin{lemma}\label{nim}
	Let $G$ be a finite group satisfying $G=P\rtimes F$, where $P$ is a
	cyclic $p$-group for some prime  $p$, $|F| > 1$ and $(p, |F|) = 1$. Then
	\[\Omega_{\rho}(G)={|C_F(P)|}\cdot {\Omega_{\rho}(P)} + {|P|} \cdot {\Omega_{\rho}(F)}.  \]	
\end{lemma}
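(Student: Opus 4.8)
The plan is to reduce this identity directly to the multiplicative factorization already recorded in Lemma \ref{mid1}. The hypotheses here---$G = P \rtimes F$ with $P$ a cyclic $p$-group, $|F| > 1$, and $(p,|F|) = 1$---match verbatim those of Lemma \ref{mid1}, so I may invoke it to obtain
\[
\rho(G) = \rho(P)^{|C_F(P)|}\, \rho(F)^{|P|}.
\]
The whole argument then hinges on the fact that $\Omega$ is completely additive, which converts this product into the desired sum.

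First I would recall complete additivity: $\Omega(mn) = \Omega(m) + \Omega(n)$ for all positive integers $m, n$, and in particular $\Omega(m^k) = k\,\Omega(m)$ for every nonnegative integer $k$. Applying $\Omega$ to the factorization above then gives
\[
\Omega_{\rho}(G) = \Omega\!\left(\rho(P)^{|C_F(P)|}\right) + \Omega\!\left(\rho(F)^{|P|}\right) = |C_F(P)|\,\Omega(\rho(P)) + |P|\,\Omega(\rho(F)).
\]
Rewriting $\Omega(\rho(P)) = \Omega_{\rho}(P)$ and $\Omega(\rho(F)) = \Omega_{\rho}(F)$ yields exactly the claimed formula. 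The conceptual point worth flagging is that the exponents $|C_F(P)|$ and $|P|$ appearing in the multiplicative identity become the multiplicative coefficients in the additive identity; this is precisely the mechanism that makes $\Omega_{\rho}$ behave like a derivative in this setting.

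I do not expect any genuine obstacle in this lemma, since all the group-theoretic content---the explicit computation of $\rho(G)$ for a split extension with cyclic $p$-group kernel---is already packaged in Lemma \ref{mid1}. The only thing to verify carefully is that the hypotheses of Lemma \ref{mid1} are met, and they are stated identically, so the passage to $\Omega$ is routine. If anything, the step deserving a word of care is the use of $\Omega(m^k) = k\,\Omega(m)$, which is immediate from additivity but is what legitimately transfers the exponents into coefficients.
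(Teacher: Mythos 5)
Your proposal is correct and follows exactly the paper's own argument: invoke Lemma \ref{mid1} for the factorization $\rho(G)=\rho(P)^{|C_F(P)|}\rho(F)^{|P|}$ and then apply the complete additivity of $\Omega$ to turn exponents into coefficients. Nothing is missing.
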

\begin{proof} Using Lemma \ref{mid1}, we have
	\begin{align*}	
			\Omega_{\rho}(G)&=\Omega_{\rho}({\rho(P)}^{|C_F(P)|}{\rho(F)}^{|P|}) =
			\Omega_{\rho}({\rho(P)}^{|C_F(P)|})+\Omega_{\rho}({\rho(F)}^{|P|}) \\ &=
		{|C_F(P)|}\cdot {\Omega_{\rho}(P)} + {|P|} \cdot {\Omega_{\rho}(F)}.
	\end{align*} 
	as wanted.
\end{proof}
\begin{lemma}\label{sylowproduct}	
	Let $P$ be a cyclic normal Sylow $p$-subgroup of a finite group $G$. Then
	\[\Omega_{\rho}(G) \leq	|P| \cdot \Omega_{\rho}(G/P) + |G/P| \cdot  \Omega_{\rho}(P)\]
	with equality if and only if $P$ is central in $G$.
\end{lemma}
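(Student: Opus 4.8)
The plan is to obtain this statement as the $\Omega$-analogue of the multiplicative divisibility relation in Lemma \ref{1sylowproductt}. That lemma already supplies everything on the group-theoretic side: since $P$ is a cyclic normal Sylow $p$-subgroup of $G$, we have
\[
\rho(G) \mid \rho(G/P)^{|P|}\,\rho(P)^{|G/P|},
\]
with equality precisely when $P$ is central in $G$. All that remains is to translate this divisibility into an additive inequality using the two basic properties of the prime omega function: complete additivity, $\Omega(mn)=\Omega(m)+\Omega(n)$ (hence $\Omega(a^k)=k\,\Omega(a)$), and monotonicity under divisibility, $a \mid b \Rightarrow \Omega(a)\le\Omega(b)$.

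First I would apply $\Omega$ to both sides of the divisibility. Monotonicity gives
\[
\Omega_{\rho}(G)=\Omega(\rho(G))\le \Omega\!\left(\rho(G/P)^{|P|}\,\rho(P)^{|G/P|}\right),
\]
and complete additivity expands the right-hand side as
\[
|P|\cdot\Omega(\rho(G/P))+|G/P|\cdot\Omega(\rho(P))=|P|\cdot\Omega_{\rho}(G/P)+|G/P|\cdot\Omega_{\rho}(P),
\]
which is exactly the claimed bound.

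For the equality clause, the point to check is that equality in the $\Omega$-inequality forces equality in the underlying divisibility. This follows from a sharpened form of monotonicity: if $a \mid b$ and we write $b=ac$, then $\Omega(b)=\Omega(a)+\Omega(c)$, so $\Omega(a)=\Omega(b)$ holds if and only if $\Omega(c)=0$, i.e. $c=1$ and $a=b$. Applying this with $a=\rho(G)$ and $b=\rho(G/P)^{|P|}\rho(P)^{|G/P|}$, equality in our bound is equivalent to $\rho(G)=\rho(G/P)^{|P|}\rho(P)^{|G/P|}$, which by the equality clause of Lemma \ref{1sylowproductt} occurs if and only if $P$ is central in $G$. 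There is no genuine obstacle in this argument; the only subtlety worth isolating explicitly is this transfer of the equality condition back through $\Omega$, since monotonicity alone is not reversible and one must use the exact additive defect $\Omega(b)-\Omega(a)=\Omega(b/a)$.
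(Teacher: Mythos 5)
Your proof is correct and follows essentially the same route as the paper: apply Lemma \ref{1sylowproductt} and translate the divisibility $\rho(G) \mid \rho(G/P)^{|P|}\rho(P)^{|G/P|}$ through the monotonicity and complete additivity of $\Omega$. In fact, your explicit justification of the equality transfer via $\Omega(b)-\Omega(a)=\Omega(b/a)$ is a point the paper glosses over, so your write-up is if anything slightly more complete.
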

\begin{proof}
	Using Lemma \ref{1sylowproductt}, we have:
		\[\rho(G) \mid	\rho(G/P)^{|P|}\rho(P)^{|G/P|}\]
		with equality if and only if $P$ is central in $G$.
		Therefore
			\begin{align*}	
		\Omega_{\rho}(G) \leq	\Omega_{\rho}(\rho(G/P)^{|P|}\rho(P)^{|G/P|})& = \Omega_{\rho}(\rho(G/P)^{|P|}) + \Omega_{\rho}(\rho(P)^{|G/P|})  \\ & = |P| \cdot \Omega_{\rho}(G/P) + |G/P| \cdot  \Omega_{\rho}(P).
			\end{align*} 
		with equality if and only if $P$ is central in $G$.
\end{proof}
\begin{corollary}\label{moshtaghkasri22}
	Let $P$ be a cyclic normal Sylow $p$-subgroup of a finite group $G$. Then
	\[\Omega_{\rho}(G/P) \geq \dfrac{\Omega_{\rho}(G)\cdot|P|-\Omega_{\rho}(P)\cdot |G|}{{|P|}^2}.\]
\end{corollary}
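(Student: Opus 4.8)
The plan is to derive this corollary directly from Lemma \ref{sylowproduct}, which is the substantive result; the corollary itself is a routine algebraic rearrangement. First I would invoke Lemma \ref{sylowproduct} to obtain
\[
\Omega_{\rho}(G) \leq |P| \cdot \Omega_{\rho}(G/P) + |G/P| \cdot \Omega_{\rho}(P).
\]
The key step is then to rewrite the index factor using $|G/P| = |G|/|P|$, so that the right-hand side becomes $|P| \cdot \Omega_{\rho}(G/P) + \tfrac{|G|}{|P|}\cdot \Omega_{\rho}(P)$, and to isolate the term containing $\Omega_{\rho}(G/P)$.

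Concretely, I would move the $\Omega_{\rho}(P)$ term to the left, giving
\[
\Omega_{\rho}(G) - \frac{|G|}{|P|}\cdot \Omega_{\rho}(P) \leq |P| \cdot \Omega_{\rho}(G/P),
\]
and then divide through by $|P| > 0$. Since $|P|$ is a positive integer, the direction of the inequality is preserved, and clearing the resulting nested fraction by writing everything over $|P|^2$ yields exactly
\[
\Omega_{\rho}(G/P) \geq \frac{\Omega_{\rho}(G)\cdot |P| - \Omega_{\rho}(P)\cdot |G|}{|P|^2},
\]
as claimed.

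There is no genuine obstacle here: the entire content of the corollary is contained in Lemma \ref{sylowproduct}, and what remains is elementary manipulation. The only point worth flagging is the bookkeeping substitution $|G/P| = |G|/|P|$, which is what converts the clean ``product-rule-style'' bound of the lemma into the ``quotient-rule'' form advertised in the introduction. I would also remark, for the reader's benefit, that by the equality clause of Lemma \ref{sylowproduct} the bound in the corollary becomes an equality precisely when $P$ is central in $G$, recovering the exact quotient rule stated in the abstract.
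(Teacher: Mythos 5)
Your proof is correct and is essentially identical to the paper's: both invoke Lemma \ref{sylowproduct}, isolate the $\Omega_{\rho}(G/P)$ term, divide by $|P|$, and use $|G/P|=|G|/|P|$ to obtain the stated form over $|P|^2$. Your closing remark about equality when $P$ is central is also exactly what the paper records separately as Corollary \ref{moshtaghkasri}.
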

\begin{proof}
	Using Lemma \ref{sylowproduct}, we have
	\begin{align*}
		&	\Omega_{\rho}(G) \leq	|P| \cdot \Omega_{\rho}(G/P) + |G/P| \cdot  \Omega_{\rho}(P)
		\Rightarrow  |P| \cdot \Omega_{\rho}(G/P) \geq \Omega_{\rho}(G) - |G/P| \cdot  \Omega_{\rho}(P) \\
		& \Rightarrow    \Omega_{\rho}(G/P) \geq \dfrac{\Omega_{\rho}(G) - |G/P| \cdot  \Omega_{\rho}(P)}{|P|} \Rightarrow    \Omega_{\rho}(G/P) \geq \dfrac{\Omega_{\rho}(G) \cdot |P| -  \Omega_{\rho}(P) \cdot  |G|  }{|P|^2}.
	\end{align*}	
\end{proof}
\begin{corollary}\label{moshtaghkasri}
Let $P$ be a cyclic normal Sylow $p$-subgroup of a finite group $G$ with $P$  central in $G$. Then
\[\Omega_{\rho}(G/P)=\dfrac{\Omega_{\rho}(G)\cdot|P|-\Omega_{\rho}(P)\cdot |G|}{{|P|}^2}.\]
\end{corollary}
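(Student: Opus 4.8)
The plan is to reduce everything to the equality case of Lemma \ref{sylowproduct}. That lemma gives, for any cyclic normal Sylow $p$-subgroup $P$ of $G$, the inequality
\[
\Omega_{\rho}(G) \leq |P| \cdot \Omega_{\rho}(G/P) + |G/P| \cdot \Omega_{\rho}(P),
\]
together with the crucial statement that equality holds if and only if $P$ is central in $G$. Since the hypothesis of the present corollary is exactly that $P$ is central, I would first invoke this to upgrade the inequality to the exact identity
\[
\Omega_{\rho}(G) = |P| \cdot \Omega_{\rho}(G/P) + |G/P| \cdot \Omega_{\rho}(P).
\]

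From here the argument is purely algebraic and parallels the derivation in Corollary \ref{moshtaghkasri22}, the only difference being that every inequality sign is now an equality. I would isolate the term $|P| \cdot \Omega_{\rho}(G/P)$, divide through by $|P|$, and then substitute $|G/P| = |G|/|P|$ to clear the resulting compound fraction. Multiplying numerator and denominator through by $|P|$ converts the expression into the single fraction
\[
\Omega_{\rho}(G/P) = \frac{\Omega_{\rho}(G) \cdot |P| - \Omega_{\rho}(P) \cdot |G|}{|P|^2},
\]
which is precisely the claimed formula.

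I do not expect any genuine obstacle here: the entire substance of the statement is carried by the equality criterion of Lemma \ref{sylowproduct}, which in turn rests on Lemma \ref{1sylowproductt}. The only points requiring care are bookkeeping ones, namely tracking the factor $|P|^2$ that appears in the denominator when the fractions are combined and ensuring the sign of the subtracted term $\Omega_{\rho}(P) \cdot |G|$ is correct. In short, this corollary is an immediate consequence of the centrality branch of the quotient estimate, and the proof amounts to specializing and rearranging that estimate.
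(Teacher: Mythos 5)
Your proposal is correct and matches the paper's intended argument exactly: the paper states this corollary without a separate proof precisely because it follows from the equality case of Lemma \ref{sylowproduct} (centrality gives $\Omega_{\rho}(G) = |P|\cdot\Omega_{\rho}(G/P) + |G/P|\cdot\Omega_{\rho}(P)$) combined with the same algebraic rearrangement already carried out in Corollary \ref{moshtaghkasri22}. The bookkeeping you describe — isolating $|P|\cdot\Omega_{\rho}(G/P)$, substituting $|G/P| = |G|/|P|$, and clearing denominators to produce the $|P|^2$ — is exactly the computation in that corollary, now with equalities throughout.
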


\begin{theorem}\label{A4}
	Let $G$ be a finite  group of order $12$. Then
	   \begin{enumerate}
		\item[(i)] \( \Omega_{\rho}(A_4) \leq \Omega_{\rho}(G) \) for all such \( G \),
		\item[(ii)] \( \Omega_{\rho}(G) = 11 \) if and only if \( G \cong A_4 \).
	\end{enumerate}
\end{theorem}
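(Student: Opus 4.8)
The plan is to lean on Lemma \ref{prime}, which gives $|G| \le 1 + \Omega_{\rho}(G)$ with equality precisely when every non-trivial element of $G$ has prime order. For any $G$ of order $12$ this already yields $\Omega_{\rho}(G) \ge 11$. Since the non-identity elements of $A_4$ are the three double transpositions (order $2$) and the eight $3$-cycles (order $3$), all of prime order, $A_4$ realizes the equality case, so $\Omega_{\rho}(A_4) = |A_4| - 1 = 11$; concretely $\rho(A_4) = 2^3 3^8$. Combining these observations proves part (i): $\Omega_{\rho}(A_4) = 11 \le \Omega_{\rho}(G)$ for every group $G$ of order $12$.

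For part (ii), the equality clause of Lemma \ref{prime} shows that $\Omega_{\rho}(G) = 11$ holds if and only if every non-trivial element of $G$ has prime order, i.e.\ every element order lies in $\{1,2,3\}$. It therefore remains to prove that $A_4$ is the unique group of order $12$ with no element of order $4$, $6$, or $12$, which I would handle by a short Sylow analysis. Let $n_3$ denote the number of Sylow $3$-subgroups, so $n_3 \in \{1,4\}$. If $n_3 = 4$, then the eight elements of order $3$ together with the identity leave exactly three elements, which must form the unique (hence normal) Sylow $2$-subgroup $P$. A cyclic $P \cong C_4$ is impossible, since $\mathrm{Aut}(C_4) = C_2$ admits no order-$3$ action and forces $G \cong C_{12}$ with $n_3 = 1$; thus $P \cong C_2 \times C_2$ and $G = P \rtimes C_3 \cong A_4$, in which no element has order $4$ or $6$.

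If instead $n_3 = 1$, the Sylow $3$-subgroup $C_3$ is normal and $G = C_3 \rtimes P$ with $|P| = 4$. Here the action is given by a homomorphism $\phi : P \to \mathrm{Aut}(C_3) = C_2$, and since $|P| = 4 > 2$ the kernel $\ker\phi$ is a non-trivial $2$-subgroup centralizing $C_3$; any involution in it, multiplied by a generator of $C_3$, yields an element of order $6$. Hence every group with $n_3 = 1$ (namely $C_{12}$, $C_6 \times C_2$, the dihedral group of order $12$, and the dicyclic group of order $12$) is excluded, leaving $A_4$ as the unique group of order $12$ meeting the equality condition, which proves part (ii).

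The only substantive step is the case analysis isolating $A_4$; everything else is a direct consequence of Lemma \ref{prime}. I expect the one point needing care to be the $n_3 = 1$ branch, where one must guarantee a non-trivial centralizing $2$-element (forcing an element of order $6$); this is exactly the observation that a homomorphism from a group of order $4$ into $C_2$ cannot be injective and so has non-trivial kernel.
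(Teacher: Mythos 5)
Your proposal is correct, but it takes a genuinely different route from the paper. The paper's proof is purely computational: it lists (via GAP) the values of $\rho$ and $\Omega_{\rho}$ for all five groups of order $12$, namely $\rho(C_3 \rtimes C_4)=2^{15}3^4$, $\rho(C_{12})=2^{15}3^8$, $\rho(A_4)=2^3 3^8$, $\rho(D_{12})=2^9 3^4$, $\rho(C_6\times C_2)=2^9 3^8$, and reads off that $A_4$ attains the minimum $11$ and is the only group attaining it. You instead derive part (i) conceptually from Lemma \ref{prime}: $\Omega_{\rho}(G)\geq |G|-1=11$ for every group of order $12$, with equality exactly when all non-trivial elements have prime order, and $A_4$ is such a group; then part (ii) reduces to showing $A_4$ is the unique group of order $12$ with no element of order $4$, $6$, or $12$, which your Sylow analysis ($n_3=4$ forces $C_2\times C_2 \rtimes C_3 \cong A_4$; $n_3=1$ forces an involution centralizing the normal $C_3$, hence an element of order $6$) establishes correctly. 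Your approach buys independence from machine computation and explains \emph{why} $A_4$ is extremal (it is the equality case of Lemma \ref{prime}), and the same template generalizes to other orders; the paper's approach is shorter given GAP and additionally produces the full table of $\Omega_{\rho}$-values, which is reused implicitly elsewhere (e.g.\ the step in Theorem \ref{A5} that needs $\Omega_{\rho}(F)\geq 11$ for $|F|=12$ — a fact your argument also delivers). One cosmetic point: in the $n_3=4$ branch you should state explicitly that the action of $C_3$ on $C_2\times C_2$ is non-trivial (otherwise $G\cong C_6\times C_2$ has $n_3=1$, a contradiction), so that $G\cong A_4$ follows; this is implicit in your argument but worth a sentence.
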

\begin{proof}
	Using GAP, we have
   \begin{align*}
&	\rho(C_3 \rtimes C_4) = 2^{15}3^4,  & \Omega_{\rho}(C_3 \rtimes C_4) &= 19, \\
&	\rho(C_{12}) = 2^{15}3^8,           & \Omega_{\rho}(C_{12}) &= 23, \\
&	\rho(A_4) = 2^33^8,                 & \Omega_{\rho}(A_4) &= 11, \\
&	\rho(D_{12}) = 2^93^4,              & \Omega_{\rho}(D_{12}) &= 13, \\
&	\rho(C_6 \times C_2) = 2^93^8,      & \Omega_{\rho}(C_6 \times C_2) &= 17.
\end{align*}
We get the result.
\end{proof}
\begin{theorem}\label{A5}
	Let $G$ be a finite  group of order $60$. Then $\Omega_{\rho}(A_5) \leqslant \Omega_{\rho}(G)$ and
	 $\Omega_{\rho}(G)=59$   if and only if $G  \cong A_5$.
\end{theorem}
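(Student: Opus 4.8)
The plan is to deduce the theorem directly from Lemma~\ref{prime} rather than computing $\Omega_\rho$ separately for each of the thirteen groups of order $60$. By Lemma~\ref{prime}, every group $G$ with $|G|=60$ satisfies $\Omega_\rho(G)\geq |G|-1=59$, with equality precisely when every nontrivial element of $G$ has prime order. Since the element orders occurring in $A_5$ are $1,2,3,5$, all of its nontrivial elements have prime order, so $\Omega_\rho(A_5)=59$ (consistent with $\rho(A_5)=2^{15}3^{20}5^{24}$). This at once yields $\Omega_\rho(A_5)=59\leq\Omega_\rho(G)$ for every such $G$, and reduces the equality statement to the purely group-theoretic claim that the only group of order $60$ all of whose nontrivial elements have prime order is $A_5$.

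To prove this claim, I would assume $|G|=60=2^2\cdot 3\cdot 5$ and that the set of element orders of $G$ is contained in $\{1,2,3,5\}$; in particular a Sylow $2$-subgroup contains no element of order $4$ and hence equals $C_2\times C_2$. The central step is to show that $G$ cannot be solvable. Assuming otherwise, I would choose a minimal normal subgroup $N$, which is an elementary abelian $q$-group for some prime $q\in\{2,3,5\}$, and rule out each possibility by a coprime-action argument: an element of order $r$ acting on $N$ with $r\nmid|\mathrm{Aut}(N)|$ must centralize $N$. If $q=5$ then $N=C_5$ and any element of order $3$ centralizes it (since $3\nmid|\mathrm{Aut}(C_5)|=4$), producing an element of order $15$; if $q=3$ then $N=C_3$ and any element of order $5$ centralizes it (since $5\nmid 2$), again giving order $15$; if $q=2$ and $|N|=2$ then $N$ is central and combines with an element of order $3$ to give order $6$; and if $q=2$ and $N=C_2\times C_2$ then any element of order $5$ centralizes it (since $5\nmid|\mathrm{Aut}(N)|=6$), yielding order $10$. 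Each case contradicts the assumption that every nontrivial element has prime order.

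It follows that $G$ is non-solvable, and since $A_5$ is the unique non-abelian simple group of order $60$, any non-solvable group of order $60$ has $A_5$ as a composition factor and is therefore isomorphic to $A_5$; this finishes the equality characterization. The main obstacle is precisely the solvable case analysis: one must check that an elementary abelian Sylow $2$-subgroup is not by itself enough to forbid composite element orders, and the coprime action on a minimal normal subgroup is what forces the contradiction in every case. The concluding identification of the non-solvable group with $A_5$ is classical and, if desired, could equally be confirmed by a GAP computation in the style of Theorem~\ref{A4}.
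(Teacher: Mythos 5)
Your proof is correct, but it takes a genuinely different route from the paper's. The paper only treats the solvable case explicitly (the non-solvable case being just $A_5$ itself): it invokes the divisibility constraints of Lemma~\ref{mohem} on the exponents $[\rho(G)]_p$ to force $\rho(G)=2^{15}3^{20}5^{\alpha_5}$ with $\alpha_5\in\{12,24\}$, then Hall's criterion (Lemma~\ref{hall}) to produce a normal Sylow $5$-subgroup, the factorization of Lemma~\ref{mid1} to write $G=P\rtimes F$ with $|F|=12$ and $\rho(F)=2^3 3^4$, and finally the GAP-computed bound $\Omega_{\rho}(F)\geq 11$ of Theorem~\ref{A4} to reach a contradiction. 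You instead observe that Lemma~\ref{prime} already gives $\Omega_{\rho}(G)\geq |G|-1=59$ for \emph{every} group of order $60$, with equality exactly when all non-trivial elements have prime order; since the element orders of $A_5$ are $1,2,3,5$, the inequality is immediate for all $G$ at once, and the equality statement reduces to showing $A_5$ is the only group of order $60$ whose non-trivial elements all have prime order. Your coprime-action analysis of a minimal normal subgroup of a putative solvable example is sound in each case (producing an element of order $15$, $15$, $6$, or $10$ respectively), and the identification of a non-solvable group of order $60$ with $A_5$ is standard. What your approach buys: it is elementary and self-contained, needs neither GAP nor the order-$12$ base case, and in fact mirrors the argument the paper itself uses later at order $1092$, where it cites the classification of groups all of whose elements have prime order \cite{prime}; your hand-made case analysis replaces that citation. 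What the paper's approach buys: its machinery (Lemmas~\ref{mohem}, \ref{hall}, \ref{mid1} together with Theorem~\ref{A4}) scales to the subsequent theorems on orders $660$ and $1092$, where the crude bound $|G|-1$ from Lemma~\ref{prime} falls far short of the targets $\Omega_{\rho}(L_2(11))=769$ and $\Omega_{\rho}(L_2(13))=1273$, so your shortcut is special to order $60$ (and more generally to orders where the target group has all non-trivial elements of prime order).
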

\begin{proof}
	It is enough to show that $\Omega_{\rho}(A_5) < \Omega_{\rho}(G)$, when $G$ is
	solvable. 
We know $|G|=2^2\cdot 3 \cdot 5$ and $\Omega_{\rho}(A_5)=59$. Therefore $\rho(G)=2^{\alpha_2}3^{\alpha_3}5^{\alpha_5}$ and $\Omega_{\rho}(G)={\alpha_2}+{\alpha_3}+{\alpha_5}$. Using Lemma \ref{mohem}, we have
\begin{align}\label{1}
	15 \mid \alpha_2; \qquad 20 \mid \alpha_3; \qquad 12 \mid \alpha_5.
\end{align}
If any of  $\alpha_2\neq 15$,  $\alpha_3 \neq 15$ or $\alpha_5 \notin \{15, 24\}$ holds, then by (\ref{1}) implies  $59 < \Omega_{\rho}(G)$ and we get the result.
Thus, we may assume $\rho(G)=2^{15}3^{20}5^{\alpha_5}$, where $\alpha_3 \in \{12, 24\}$.
By Lemma \ref{hall}, we get that $G$ has a unique Sylow $5$-subgroup, say $P$. Since $G$ is solvable, $G \cong P \rtimes F$, where $F$ is a group of order $12$.
Applying Lemma \ref{mid1},
	\[\rho(G) = \rho(P)^{|C_{F}(P)|} \rho(F)^{|P|}\Rightarrow 2^{15}3^{20}5^{\alpha_5}=(5^4)^{|C_{F}(P)|} \rho(F)^{5}.\]
	Therefore $\rho(F)=2^{3}3^{4}$ and consequently $\Omega_{\rho}(F)=7$.
	However, this contradicts Lemma \ref{A4}, which states that $\Omega_{\rho}(F) \geq 11$ for groups $F$ of order 12.
\end{proof}
\begin{theorem}
	Let $G$ be a finite  group of order $660$. Then $\Omega_{\rho}(L_2(11)) \leqslant \Omega_{\rho}(G)$ and
	$\Omega_{\rho}(G)=769$   if and only if $G  \cong L_2(11)$.
\end{theorem}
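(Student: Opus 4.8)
The plan is to pin down the target value $\Omega_\rho(L_2(11))=769$ and then show that every group $G$ of order $660$ other than $L_2(11)$ satisfies $\Omega_\rho(G)>769$, treating the non-solvable and solvable cases separately. First I would record the value by specializing the formula for $\Omega_\rho(L_2(q))$ to $q=11$: since $\tfrac{q-1}{2}=5$, $\tfrac{q+1}{2}=6$, $\tfrac{q(q+1)}{2}=66$, $\tfrac{q(q-1)}{2}=55$, and $(q+1)(q-1)=120$, and since $\Omega_\rho(C_5)=4$ by part (i) while $\Omega_\rho(C_6)=\Omega_\rho(C_2\times C_3)=\Omega_\rho(C_2)\cdot 3+\Omega_\rho(C_3)\cdot 2=1\cdot 3+2\cdot 2=7$ by Lemma~\ref{directt}, one obtains $\Omega_\rho(L_2(11))=66\cdot 4+55\cdot 7+120=769$.

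For the non-solvable groups I would invoke Lemma~\ref{seri}: a non-solvable $G$ of order $660$ admits a normal series $1\unlhd H\unlhd K\unlhd G$ with $K/H$ a direct product of isomorphic non-abelian simple groups and $|G/K|\mid|\mathrm{Out}(K/H)|$. The only non-abelian simple groups whose order divides $660=2^2\cdot 3\cdot 5\cdot 11$ are $A_5$ (order $60$) and $L_2(11)$ (order $660$), and a product of two or more such factors already has order at least $3600$; hence $K/H\in\{A_5,L_2(11)\}$. If $K/H\cong L_2(11)$ then $H=1$, $K=G$, and $G\cong L_2(11)$. If $K/H\cong A_5$, then $|H|\cdot|G/K|=11$ with $|G/K|\mid|\mathrm{Out}(A_5)|=2$, forcing $|G/K|=1$ and $|H|=11$, so $H\cong C_{11}$ is normal with $G/H\cong A_5$; the conjugation map $G\to\mathrm{Aut}(C_{11})\cong C_{10}$ factors through $G^{\mathrm{ab}}$, whose order divides $11$ because $G/H$ is perfect, so the map is trivial and $H$ is central, and since $\gcd(|M(A_5)|,11)=\gcd(2,11)=1$ the central extension splits, giving $G\cong C_{11}\times A_5$. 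As $\gcd(11,60)=1$, Lemma~\ref{directt} yields $\Omega_\rho(C_{11}\times A_5)=\Omega_\rho(C_{11})\cdot 60+\Omega_\rho(A_5)\cdot 11=10\cdot 60+59\cdot 11=1249>769$.

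For solvable $G$, Lemma~\ref{hall} excludes $n_{11}=12$ (since $4\not\equiv 1$ and $3\not\equiv 1\pmod{11}$), so the Sylow $11$-subgroup $P\cong C_{11}$ is normal and $G=P\rtimes F$ with $|F|=60$ and $F$ solvable. Lemma~\ref{nim} then gives $\Omega_\rho(G)=10\,|C_F(P)|+11\,\Omega_\rho(F)$, where $|C_F(P)|\in\{6,12,30,60\}$ because $F/C_F(P)$ embeds in $\mathrm{Aut}(C_{11})\cong C_{10}$. If $|C_F(P)|\in\{30,60\}$ I would use only $\Omega_\rho(F)\geq 60$ (Theorem~\ref{A5}, since $F\not\cong A_5$) to get $\Omega_\rho(G)\geq 300+660=960$. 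If $|C_F(P)|\in\{6,12\}$, the image of $F$ in $C_{10}$ has order $5$ or $10$, so $F$ has $C_5$ as a quotient; combined with $P_5\unlhd F$ (again Lemma~\ref{hall}, as $n_5=6$ is excluded) this forces the action on the normal Sylow $5$-subgroup to be trivial, i.e. $F\cong C_5\times M$ with $|M|=12$, so by Lemma~\ref{directt} and Theorem~\ref{A4} one has $\Omega_\rho(F)=48+5\,\Omega_\rho(M)\geq 48+55=103$ and hence $\Omega_\rho(G)\geq 60+11\cdot 103=1193$. In every case $\Omega_\rho(G)\geq 960>769$, so the minimum $769$ is attained only by the non-solvable group $L_2(11)$.

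The main obstacle is the regime where $C_F(P)$ is small: there the summand $10\,|C_F(P)|$ is minimized, and the naive bound $\Omega_\rho(F)\geq 60$ only gives $60+660=720<769$, so a cruder estimate fails. The crucial point is therefore the structural dichotomy that a small centralizer forces a large (order-$5$ or order-$10$) action of $F$ on $C_{11}$, which produces a $C_5$-quotient of $F$, which in turn forces the normal Sylow $5$-subgroup to be central and hence the splitting $F\cong C_5\times M$; this makes $\Omega_\rho(F)$ jump from the generic bound $60$ up to at least $103$, more than compensating for the loss in the first term. Establishing this dichotomy cleanly is the delicate step; the remaining ingredients—the low-order simple-group classification, Hall's arithmetic criterion for Sylow numbers, and the multiplicative formula of Lemma~\ref{nim}—are then routine to assemble.
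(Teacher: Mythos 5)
Your proposal is correct, and while the non-solvable half coincides with the paper's argument, your treatment of the solvable case takes a genuinely different route. The paper argues by contradiction through the arithmetic of the exponents of $\rho(G)$: it invokes Lemma \ref{mohem} and Lemma \ref{2-part} to force $\rho(G)=2^{165}3^{220}5^{\alpha_5}11^{\alpha_{11}}$ whenever $\Omega_{\rho}(G)\leq 769$, then uses Lemma \ref{hall} and Lemma \ref{mid1} to transfer these exponents to the order-$60$ complement $F$, concluding $\Omega_{\rho}(F)\leq 59$ and hence $F\cong A_5$ by Theorem \ref{A5}, contradicting solvability. You instead avoid Lemma \ref{mohem} altogether and work structurally: after Hall and Schur--Zassenhaus give $G=P\rtimes F$, you apply the exact formula of Lemma \ref{nim}, $\Omega_{\rho}(G)=10\,|C_F(P)|+11\,\Omega_{\rho}(F)$, and run a dichotomy on $|C_F(P)|\in\{6,12,30,60\}$ coming from $F/C_F(P)\hookrightarrow \operatorname{Aut}(C_{11})\cong C_{10}$. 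Your key extra step --- that a small centralizer forces a $C_5$-quotient of $F$, which together with the Hall-forced normal Sylow $5$-subgroup yields $F\cong C_5\times M$ (trivially intersecting normal subgroups commute) and hence $\Omega_{\rho}(F)\geq 48+5\cdot 11=103$ via Lemma \ref{directt} and Theorem \ref{A4} --- is sound, and closes exactly the gap where the naive bound $\Omega_{\rho}(F)\geq 60$ would give only $720<769$. What each approach buys: the paper's exponent method is the one that scales to its later theorem for order $1092$ (where the same divisibility machinery is reused), whereas your argument is more self-contained, yields an explicit quantitative gap ($\Omega_{\rho}(G)\geq 960$ for all solvable $G$ of order $660$, not merely $>769$), and exposes the structural reason the bound holds. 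In the non-solvable case you actually supply slightly more detail than the paper (the perfectness argument for centrality of $H$ and the coprimality of the Schur multiplier for the splitting), but the skeleton --- Lemma \ref{seri}, the two cases $K/H\cong A_5$ or $L_2(11)$, and the computation $\Omega_{\rho}(C_{11}\times A_5)=1249$ --- is the same.
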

\begin{proof}
We first show that for solvable groups $G$,  $\Omega_{\rho}(L_2(11)) < \Omega_{\rho}(G)$.
	We know $|G|=2^2\cdot 3 \cdot 5 \cdot 11$ and $\Omega_{\rho}(L_2(11))=769$. Let  $\rho(G)=2^{\alpha_2}3^{\alpha_3}5^{\alpha_5}5^{\alpha_{11}}$ with $\Omega_{\rho}(G)={\alpha_2}+{\alpha_3}+{\alpha_5}+{\alpha_{11}}$. Using Lemma \ref{mohem}, we have
	\begin{align}\label{22}
	165 \mid \alpha_2; \qquad  220 \mid \alpha_3; \qquad   132 \mid  \alpha_5; \qquad  60 \mid  \alpha_{11}.
	\end{align}

	If $\alpha_2\neq 165$, then by (\ref{22}) and Lemma \ref{2-part}, $\alpha_2\geqslant495$. Thus
	$\Omega_{\rho}(G)={\alpha_2}+{\alpha_3}+{\alpha_5}+{\alpha_{11}}\geqslant 495+220+132+60>769,$
	 and so we get the result. Therefore, we assume that $\alpha_2 = 165$.
	 
	If  $\alpha_3 \neq 220$ or $\alpha_5 \notin \{132, 264\}$, then by (\ref{22}),  $769 < \Omega_{\rho}(G)$ and we get the result.
	Therefore $\rho(G)=2^{165}3^{220}5^{\alpha_5}11^{\alpha_{11}}$, where $\alpha_5 \in \{132, 264\}$.
	By Lemma \ref{hall}, we get that $G$ has a unique Sylow $11$-subgroup, say $P$. Since $G$ is solvable, $G \cong P \rtimes F$, where $F$ is a group of order $60$.
	Using Lemma \ref{mid1},
	\[\rho(G) = \rho(P)^{|C_{F}(P)|} \rho(F)^{|P|}\Rightarrow 2^{165}3^{220}5^{\alpha_5}11^{\alpha_{11}}=(11^{10})^{|C_{F}(P)|} \rho(F)^{11}.\]
	Therefore $\rho(F)=2^{15}3^{20}5^{t_5}$, where $t_5={\alpha_5}/11 \in \{12,24\}$ and so $\Omega_{\rho}(F)\leqslant 59$.
	By Theorem \ref{A5}, we have $F \cong A_5$. Thus $F$ is not solvable and so $G$ is not solvable, which is a contradiction.
	Therefore $G$ is non-solvable.

 Since $G$ is non-solvable, Lemma \ref{seri} implies the existence of a normal series:
 \[ 1 \unlhd H \unlhd K \unlhd G \]
 where the quotient $K/H$ is isomorphic to either $A_5$ or $L_2(11)$, and $|G/K|$ divides $|\operatorname{Out}(K/H)|$.
 
 \noindent\textbf{Case 1:} $K/H \cong A_5$.
 In this case, we have $|H| = 11$ and $G = K$, making $G$ an extension of $C_{11}$ by $A_5$. We know that $G / C_G(H) \hookrightarrow \operatorname{Aut}(H)$ and 
 $(G / H) /\left(C_G(H) / H\right) \cong G / C_G(H)$.
  So $G$ is a central extension of $H$ by $A_5$.
 Since the Schur multiplier of $A_5$ is $C_2$, we get that $G \cong C_{11} \times A_5$.
 Applying Lemma \ref{directt}, we compute:
 \[
 \Omega_{\rho}(C_{11} \times A_5) = \Omega_{\rho}(C_{11}) \cdot |A_5| + \Omega_{\rho}(A_5) \cdot |C_{11}| = 10 \cdot 60 + 59 \cdot 11 = 1249
 \]
 This establishes that $\Omega_{\rho}(C_{11} \times A_5) > \Omega_{\rho}(L_2(11))$.
 
 \noindent\textbf{Case 2:} $K/H \cong L_2(11)$.
  In this case, we immediately obtain 
 $ G \cong L_2(11) $.
 
 The proof is now complete.
\end{proof}
\begin{theorem}
	Let $G$ be a finite  group of order $1092$. Then $\Omega_{\rho}(L_2(13)) \leqslant \Omega_{\rho}(G)$ and
	$\Omega_{\rho}(G)=1273$   if and only if $G  \cong L_2(13)$.
\end{theorem}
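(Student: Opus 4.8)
The plan is to follow the template of the preceding $L_2(11)$ theorem, separating the cases where $G$ is solvable and where $G$ is non-solvable. As a preliminary, the value $\Omega_{\rho}(L_2(13)) = 1273$ follows from the formula for $\Omega_{\rho}(L_2(q))$ at $q=13$: since $\rho(C_6)=2^3 3^4$ and $\rho(C_7)=7^6$, Lemma \ref{huppert} gives $\rho(L_2(13)) = \rho(C_6)^{91}\rho(C_7)^{78}\,13^{168} = 2^{273}3^{364}7^{468}13^{168}$, so $\Omega_{\rho}(L_2(13)) = 273 + 364 + 468 + 168 = 1273$.

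For the non-solvable case I would invoke Lemma \ref{seri} to obtain a normal series $1 \unlhd H \unlhd K \unlhd G$ with $K/H$ a direct product of isomorphic non-abelian simple groups and $|G/K|$ dividing $|\operatorname{Out}(K/H)|$. As the smallest non-abelian simple group has order $60$ and $60^2 > 1092$, the quotient $K/H$ is a single simple group $S$ with $|S|$ dividing $1092 = 2^2\cdot 3\cdot 7\cdot 13$. Scanning the non-abelian simple groups of order at most $1092$, none of the orders $60,168,360,504,660$ divides $1092$, leaving only $|S|=1092$, i.e. $S\cong L_2(13)$. Then $|K/H| = |G|$ forces $H=1$ and $K=G$, so $G\cong L_2(13)$ and $\Omega_{\rho}(G)=1273$. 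Unlike the $L_2(11)$ case, no proper simple divisor such as $A_5$ occurs, so this case is immediate.

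For the solvable case I would prove the strict inequality $\Omega_{\rho}(G) > 1273$ by a two-stage Sylow reduction. Since $G$ is solvable and no prime-power divisor of $84 = 2^2\cdot 3\cdot 7$ is $\equiv 1 \pmod{13}$, Lemma \ref{hall} forces a unique, hence normal, Sylow $13$-subgroup $P\cong C_{13}$; by Schur--Zassenhaus, $G = P\rtimes F$ with $|F|=84$, and Lemma \ref{nim} gives $\Omega_{\rho}(G) = 12\,|C_F(P)| + 13\,\Omega_{\rho}(F)$. Repeating the argument inside the solvable group $F$ (where $n_7=1$ because no prime-power divisor of $12$ is $\equiv 1\pmod 7$) yields $F = Q\rtimes H$ with $Q\cong C_7$ and $|H|=12$, and Lemma \ref{nim} again gives $\Omega_{\rho}(F) = 6\,|C_H(Q)| + 7\,\Omega_{\rho}(H)$. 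By Theorem \ref{A4}, $\Omega_{\rho}(H)\geq 11$, with equality exactly when $H\cong A_4$.

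The key step, which I expect to be the main obstacle, is that the naive bound $\Omega_{\rho}(F) \geq 6\cdot 2 + 7\cdot 11 = 89$ only yields $\Omega_{\rho}(G) \geq 13\cdot 89 = 1157 < 1273$ and must be sharpened. The sharpening rests on the tension between a small centralizer and a small $\Omega_{\rho}(H)$. The action map $H \to \operatorname{Aut}(Q)\cong C_6$ has cyclic image of order $d\mid 6$, so $|C_H(Q)| = 12/d \in \{2,4,6,12\}$, and $|C_H(Q)|=2$ forces $d=6$, i.e. $H$ surjects onto $C_6$. However, $\Omega_{\rho}(H)=11$ forces $H\cong A_4$, whose largest cyclic quotient is $C_3$; thus $A_4$ cannot attain $|C_H(Q)|=2$ and instead has $|C_H(Q)|\in\{4,12\}$. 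Consequently either $|C_H(Q)|\geq 4$, giving $\Omega_{\rho}(F) \geq 24 + 77 = 101$, or $|C_H(Q)|=2$ with $H\twoheadrightarrow C_6$, which among the order-$12$ groups leaves only $C_{12}$ and $C_6\times C_2$ (both with $\Omega_{\rho}(H)\geq 17$ by the values tabulated in Theorem \ref{A4}), giving $\Omega_{\rho}(F)\geq 12 + 119 = 131$. In either case $\Omega_{\rho}(F)\geq 101$, hence $\Omega_{\rho}(G) \geq 13\cdot 101 = 1313 > 1273$, which finishes the solvable case and thus the theorem. One could alternatively first pin down the exponents of $\rho(G)$ using Lemma \ref{mohem} and Lemma \ref{2-part} exactly as in the $L_2(11)$ proof, but the reduction above avoids the resulting case enumeration.
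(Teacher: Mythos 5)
Your proposal is correct: the arithmetic ($\Omega_{\rho}(L_2(13))=1273$), the use of Hall's lemma and Schur--Zassenhaus to get $G=C_{13}\rtimes F$ and $F=C_7\rtimes H$, the centralizer case analysis, and the non-solvable reduction via Lemma \ref{seri} all check out. But your solvable case runs on a genuinely different engine than the paper's. The paper never bounds $\Omega_{\rho}(F)$ numerically; instead it pins down the exponent vector of $\rho(G)$ using the divisibility constraints of Lemma \ref{mohem} and the parity constraint of Lemma \ref{2-part}, enumerates the few surviving possibilities, and forces $\rho(G)=2^{273}3^{728}7^{156}13^{84}$ exactly. The contradiction then comes from a third Sylow reduction ($G=R\rtimes T$ with $|T|=156$): the complement $T$ satisfies $|T|=1+\Omega_{\rho}(T)$, so by Lemma \ref{prime} every non-trivial element of $T$ has prime order, contradicting the classification of such groups cited as \cite{prime}. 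Your argument replaces all of this with Lemma \ref{nim} applied twice and the observation that the two ways $\Omega_{\rho}(F)=6|C_H(Q)|+7\Omega_{\rho}(H)$ could be small are incompatible: $\Omega_{\rho}(H)=11$ forces $H\cong A_4$, whose abelianization $C_3$ blocks $|C_H(Q)|=2$, while $|C_H(Q)|=2$ forces $H\in\{C_{12},C_6\times C_2\}$ with $\Omega_{\rho}(H)\geq 17$; hence $\Omega_{\rho}(F)\geq 101$ and $\Omega_{\rho}(G)\geq 13\cdot 101=1313>1273$. What your route buys is self-containment: it avoids both the exponent enumeration and the external prime-order-elements classification theorem, and it also spells out the non-solvable case (single simple section of order dividing $1092$, so $K/H\cong L_2(13)$, $H=1$, $K=G$), which the paper merely asserts with ``Therefore $G$ is non-solvable, and so $G\cong L_2(13)$.'' What the paper's route buys is uniformity with its $L_2(5)$ and $L_2(11)$ proofs and the stronger intermediate conclusion that any solvable counterexample would have a completely determined $\rho(G)$.
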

\begin{proof}
	We first prove that for solvable groups $G$ of order $2^2 \cdot 3 \cdot 7 \cdot 13$,
	\[ \Omega_{\rho}(L_2(13)) < \Omega_{\rho}(G). \]
We know $|G|=2^2\cdot 3 \cdot 7 \cdot 13$ and $\Omega_{\rho}(L_2(13))=1273$. Let $\rho(G)=2^{\alpha_2}3^{\alpha_3}7^{\alpha_7}13^{\alpha_{13}}$ with $\Omega_{\rho}(G)={\alpha_2}+{\alpha_3}+{\alpha_7}+{\alpha_{13}}$. Using Lemma \ref{mohem}, we have
\begin{align}\label{333}
	273 \mid \alpha_2; \qquad  364 \mid \alpha_3; \qquad   156 \mid  \alpha_7; \qquad  84 \mid  \alpha_{13}.
\end{align}
If $\alpha_2\neq 273$, then by (\ref{333}) and Lemma \ref{2-part}, $\alpha_2\geqslant 819$. Thus
$\Omega_{\rho}(G)={\alpha_2}+{\alpha_3}+{\alpha_5}+{\alpha_{11}}\geqslant 813+364+156+84>1273,$
and so we get the result. Therefore, we assume that $\alpha_2 = 273$.\\
If  $\alpha_3 \notin \{364, 728\}$, then by (\ref{333}),  $1273 < \Omega_{\rho}(G)$ and we get the result.\\
Therefore $\rho(G)=2^{273}3^{\alpha_3}7^{\alpha_7}13^{\alpha_{13}}$, where $\alpha_3 \in \{364, 728\}$.
By Lemma \ref{hall}, we get that $G$ has a unique Sylow $13$-subgroup, say $P$. Since $G$ is solvable, $G \cong P \rtimes H$, where $H$ is a group of order $84$.
Using Lemma \ref{mid1},
\[\rho(G) = \rho(P)^{|C_{H}(P)|} \rho(H)^{|P|}\Rightarrow 2^{273}3^{\alpha_3}7^{\alpha_7}13^{\alpha_{13}}=(13^{12})^{|C_{H}(P)|} \rho(H)^{13}.\]
Therefore $|H|=2^2\cdot 3 \cdot 7$ and $\rho(H)=2^{21}3^{t_3}7^{t_7}$, where $t_3={\alpha_3}/13 \in \{28,56\}$ and $t_7={\alpha_7}/13$.

By Lemma \ref{hall}, we get that $H$ has a unique Sylow $7$-subgroup, say $Q$. Since $H$ is solvable, $H \cong Q \rtimes F$, where $F$ is a group of order $12$.
Using Lemma \ref{mid1},
\[\rho(H) = \rho(Q)^{|C_{F}(Q)|} \rho(F)^{|Q|}\Rightarrow 2^{21}3^{t_3}7^{t_7}=(7^{6})^{|C_{F}(Q)|} \rho(F)^{7}.\]
Therefore $|F|=2^2\cdot 3$ and $\rho(F)=2^{3}3^{r_3}$, where $r_3={t_3}/7 \in \{4, 8\}$. 
Using Theorem \ref{A4}, we have $\Omega_{\rho}(F) \geqslant12 $. Thus  $r_3=8$ and so $t_3=56$ and ${\alpha_3}=728$.
Therefore $\rho(G)=2^{273}3^{720}7^{\alpha_7}13^{\alpha_{13}}$.
If $\alpha_7 \neq 156$ or $\alpha_{13}\neq 84$, then by (\ref{333}) $\alpha_7 > 312$ or $\alpha_{13}> 168$. Thus $\Omega_{\rho}(F)> 1273$ and we get the result.
Therefore $\rho(G)=2^{273}3^{728}7^{156}13^{84}$.

By Lemma \ref{hall}, we get that $G$ has a unique Sylow $7$-subgroup, say $R$. Since $G$ is solvable, $G \cong R \rtimes T$, where $T$ is a group of order $156$.
Using Lemma \ref{mid1},
\[\rho(G) = \rho(R)^{|C_{T}(R)|} \rho(T)^{|R|}\Rightarrow 2^{21}3^{t_3}7^{t_7}=(7^{6})^{|C_{T}(R)|} \rho(T)^{7}.\]
Therefore $|T|=2^2\cdot 3 \cdot 13$ and $\rho(T)=2^{39}3^{104}13^{12}$. Thus $|T|=1+\Omega_{\rho}(T)$.
Using Lemma  \ref{prime}, $ T $ is a group with
non-trivial elements of prime orders. 
By \cite[Theorem]{prime}, $T$ is a $p$-group of exponent $p$ or $T$ is a Frobenius  group of order $p^nq$ or $T \cong A_5$, which is a contradiction.

Therefore $G$ is non-solvable, and so $G \cong L_2(13)$. This completes the proof.
\end{proof}
We end with the following questions:\\
\textbf{Question 1}. What information about a group $G$ can be obtained from
 $\Omega_{\rho}(G)$?\\
\textbf{Question 2}. Which groups $G$ can be uniquely determined by 
$\Omega_{\rho}(G)$?

\end{document}